\documentclass[12pt,reqno]{amsart}
\usepackage{amssymb}
\usepackage{txfonts}
\usepackage{amsmath}
\usepackage{multirow}


\vfuzz2pt 

 \newtheorem{thm}{Theorem}[section]
 \newtheorem{cor}[thm]{Corollary}
 \newtheorem{lem}[thm]{Lemma}

 \theoremstyle{definition}
 \newtheorem{defn}[thm]{Definition}
 \theoremstyle{remark}
 \newtheorem{rem}[thm]{Remark}
  
 \numberwithin{equation}{section}

 \newcommand{\tabincell}[2]{\begin{tabular}{@{}#1@{}}#2\end{tabular}}

\begin{document}

\title
{On the Type IIb solutions to mean curvature flow }

\author{Liang Cheng}

\dedicatory{}
\date{}

 \subjclass[2000]{
Primary 53C44; Secondary 53C42, 57M50.}

\keywords{Type IIb mean curvature flows; Entire graphs; Andrews' noncollapsing theorem for noncompact hypersurfaces}

\thanks{Liang Cheng's Research partially supported by China Scholarship Council no.201906775001, self-determined research funds of CCNU from the colleges' basic research and operation of MOE CCNU19QN075
 }

\address{School of Mathematics and Statistics $\&$ Hubei Key Laboratory of Mathematical Sciences, Central  China Normal University, Wuhan, 430079, P.R.China
}

\email{chengliang@mail.ccnu.edu.cn}

 \maketitle

\begin{abstract}
In this paper we study the Type IIb mean curvature flow.
We first prove that if the convex entire graph
$(y,u(|y|))$   over $\mathbb{R}^n$, $n\geq 2$, satisfying there exist positive constants $\epsilon$, $c$ and $N$ such that
$
  u'(r)\geq c r^{\epsilon}
$
for $r\geq N$,
 the longtime solution to mean curvature flow with initial data $(y,u(|y|))$ must be Type IIb. 
We also study the asymptotic behavior of Type IIb mean curvature flow and show that the limit of suitable rescaling sequence for mean-convex Type IIb mean curvature flow satisfying $\delta$-Andrews' noncollapsing condition is translating soliton. 
\end{abstract}

\section{introduction}

Let $x_0:M^n\to \mathbb{R}^{n+1}$ be a complete immersed hypersurface. Consider the mean curvature
flow
\begin{equation}\label{mcf}
\frac{\partial x}{\partial t}=\vec{\mathbf{H}},
\end{equation}
with the initial data $x_0$, where $\vec{\mathbf{H}}=-H\nu$ is the mean curvature vector and $\nu$ is the outer unit
normal vector. Denote the images $x(M^n,t)=M_t$. The mean curvature flow always blows up at finite time on closed hypersurfaces. However,
the mean curvature flow for noncompact hypersurfaces may have a smooth solution which exists for all time $t>0$,
for which we call it the longtime solution.
Ecker and Huisken \cite{EH1} showed that the mean curvature flow on locally Lipschitz continuous entire graph over
$\mathbb{R}^{n}$ has a longtime solution. Notice that if $M_0$ can be written as an entire graph, then the parabolic system (\ref{mcf}) up to tangential diffeomorphisms is equivalent to
the following quasilinear equation
\begin{equation}\label{mcf_graph}
\frac{d u}{d t}=\sqrt{1+|Du|^2}div(\frac{Du}{\sqrt{1+|Du|^2}})
\end{equation}
(see \cite{EH1}), where $u$ is the graph representation for $M_t$.

Analogous to which was introduced by Hamilton \cite{Ha1} for Ricci flow, one can classify the
longtime solutions to the mean curvature flow into the following two types:
\begin{defn}\label{def_Type_III}
 The longtime solution  to  the mean curvature flow $M_t$
  is called

 (1) Type IIb   if $\sup\limits_{M^n\times
(0,+\infty)}t|A|^2=\infty$, \noindent

(2)
 Type III   if $\sup\limits_{M^n\times
(0,+\infty)}t|A|^2<\infty$,\\
where $A(\cdot,t)$ is the second fundamental form of $M_t$.
\end{defn}
\noindent
In this paper we study the Type IIb mean curvature flow.
The nontrivial examples for the Type IIb
mean curvature flow are gotten in this paper. We also study the asymptotic behavior for the Type IIb mean curvature flow.

Recall the singularity formation of the mean curvature flow   on closed hypersurfaces at the first singular time was described by Huisken \cite{H1}  as follows:
 The solution to  mean curvature flow $M_t$  on closed hypersurfaces which blows up at first finite time $T$ is called

(1) Type I  if $\sup\limits_{M^n\times
[0,T)}(T-t)|A|^2<\infty$, \noindent

(2) Type II   if $\sup\limits_{M^n\times
[0,T)}(T-t)|A|^2=\infty$.

\noindent Using a monotonicity formula, Huisken \cite{H1} showed that
Type I singularities of mean curvature flow are smooth asymptotically like
self-shinkers.
For the compact Type II mean curvature flow, choose the blowup sequence $(p_j,t_j)$  such that $t_j\in [0,T-\frac{1}{j}]$, $p_j\in M^n$, and
\begin{equation}
  H^2(p_j,t_j)(T-\frac{1}{j}-t_j)=\max\limits_{M^n\times [0,T-\frac{1}{j}]}H^2(p,t)(T-\frac{1}{j}-t)
\end{equation}
Let
$L_j = |H(p_j, t_j)|$.
Consider the following rescaled mean curvature flows
\begin{equation}\label{typeii}
M^j_t= L_j(M_{t_j+L_j^{-2}t}-x(P_j,t_j)),
\end{equation}
for $t\in[\alpha_j, \Omega_j]$, where
$\alpha_j = -t_j L^2_j \to -\infty$ and
$\Omega_j=(T-t_j-\frac{1}{j})L^2_j\to+\infty$.
For each rescaled flow (\ref{typeii}), $0 \in M^j_0$ and $|H_j|$ achieves the maximum value 1 at $t=0$.
By employing a Harnack inequality,
Hamilton \cite{Hamilton} showed that any strictly convex eternal solution to the mean curvature
flow where the mean curvature assumes its maximum value at a point in space-time must
be a translating soliton.
Huisken and Sinestrari (\cite{HS1} \cite{HS2}) proved blowup  sequence of the compact mean curvature flow with positive mean curvature subconverges to a weakly convex  limit
splitting as $\mathbb{R}^{n-k}\times \Sigma^k$, where $\Sigma^k$ is strictly convex.
Their results implies that
rescaled sequence (\ref{typeii}) subconverges to a
translating soliton if $M_0$ is mean-convex.

Similar to the Type II mean curvature flow, one can also choose suitable rescaled sequence such that the limit is an eternal solution with mean curvature achieving 
the maximum value  in the space-time.
If $M_t$ is the Type IIb mean curvature flow with bounded second fundamental form at each time slice on noncompact hypersurface,
as which was introduced by Hamilton \cite{Ha1} for Ricci flow, one can choose $j\to +\infty$, and pick $P_j$ and $t_j$ such that
\begin{equation}
t_j (j-t_j) H^2(P_j,t_j)\geq \gamma_j \sup\limits_{M^n\times[0,j]}t (j-t) H^2(P,t),
\end{equation}
where $\gamma_j\nearrow1$. Let $L_j=|H|(P_j,t_j)$. Consider the following the rescaled mean curvature flows
\begin{equation}\label{TypeIIb_rescaling}
M^j_t=L_j (M_ {t_j+L_j^{-2}t}-x(P_j,t_j)),
\end{equation}
for $t\in [\alpha_j,\Omega_j]$, where
$\alpha_j=-t_jL_j^2$ and $\Omega_j=(j-t_j)L_j^2$.
Then
$$H^2_j(\cdot, t)\leq \gamma_j^{-1}\frac{\alpha_j}{\alpha_j-t}\frac{\Omega_j}{\Omega_j-t},$$
for $t\in [\alpha_j,\Omega_j]$. Now
$$
\frac{1}{-\alpha_j^{-1}+\Omega_j^{-1}}\geq \gamma_j j^{-1}\sup\limits_{M^n\times [0,j]}(t(j-t)H^2(x,t))
\geq \frac{\gamma_j}{2}\sup\limits_{M^n\times[0,\frac{j}{2}]}tH^2(x,t)\to+\infty,
$$
Hence $\alpha_j\to -\infty$ and $\Omega_j\to +\infty$. If the Type IIb mean curvature flow is convex, one get the limit $M_{\infty}$ of the rescaled sequence (\ref{TypeIIb_rescaling})
  is an  eternal solution splitting as $\mathbb{R}^{n-k}\times \Sigma^k$  with its mean curvature achieving the maximum value $1$ in the space-time, where $\Sigma^k$ is strictly convex.
This implies that $M_{\infty}$ is a translating soliton by Hamilton's Harnack inequality (\cite{Hamilton}).

For the Type III mean curvature flow, rescaling the mean curvature flow  as
\begin{equation}\label{scaling}
\tilde{x}(\cdot,s)=\frac{1}{\sqrt{2t+1}}x(\cdot,t),
\end{equation}
where $s$ is given by $s=\frac{1}{2}\log(2t+1)$. The normalized mean curvature flow  then becomes
\begin{equation}\label{normalized_mcf}
\frac{\partial \tilde{x}}{\partial s}=\vec{\tilde{\mathbf{H}}}-\tilde{x}.
\end{equation}
Note that Type III condition implies $\sup\limits_{M^n\times
(0,+\infty)}|\tilde{A}|<\infty$. If the Type \  III mean curvature flow is convex, we can use  Hamilton's Harnack inequality to get the limit $M_{\infty}$ is a non-flat self-expander splitting as $\mathbb{R}^{n-k}\times \Sigma^k$, where $\Sigma^k$ is strictly convex (see Corollary \ref{self_expander} in the appendix). We remark that
 a counter-example in \cite{CN}(see Example 3.4 in \cite{CN}) shows that the rescaled sequence (\ref{scaling}) can not converge to the self-expander if only assuming the Type III mean curvature flow is mean-convex.
Recently
the author and Sesum \cite{CN} also introduced  monotonicity formulas related
to self-expanders and showed the normalized  flow (\ref{normalized_mcf}) for Type III mean curvature flow subconverges to the self-expander under certain conditions.

Typical examples of the Type III mean curvature flow are evolving entire graphs satisfying
the following condition\begin{equation}\label{linear_growth}
\upsilon:=\langle \nu,\omega\rangle^{-1}\leq c,
\end{equation}
which  in particular implies the entire graphs having the bounded gradient, where $\nu$ are the unit normal vectors of the graph and $\omega$ is a fixed unit vector such that $\langle \nu,\omega\rangle>0$.
 Ecker and Huisken showed that the mean curvature flow on entire graphs satisfying the condition (\ref{linear_growth}) is  Type III (Corollary 4.4 in \cite{EH}).
Moreover, Ecker and Huisken \cite{EH1} also proved  that if the entire graph satisfies
condition (\ref{linear_growth})
 and the estimate
\begin{equation}\label{growth_condition}
\langle x_0,\nu \rangle^2 \leq c(1+|x_0|^2)^{1-\delta}
\end{equation}
at time $t = 0$, where  $c<\infty$ and $\delta>0$, then
 the solution to the normalized mean curvature flow (\ref{normalized_mcf})
with initial data $x_0$
converges as $s\to \infty$ to a self-expander.

In contrast to Type III mean curvature flow, much less examples are known about the Type IIb mean curvature flow except non-flat translating solitons.
The following theorem  lead us to get the nontrivial examples for the Type IIb
mean curvature flow. Compare to the result by Ecker and Huisken that 
the mean curvature flow of entire graph which has the bounded gradient is Type III (Corollary 4.4 in \cite{EH}), we have the following theorem.

\begin{thm}\label{application_2}
 Let $(y,u(|y|))$ be the convex entire graph over $\mathbb{R}^n$, $n\geq 2$. Suppose that
 there exist positive constants $\epsilon$, $c$ and $N$ such that
 for $r\geq N$
 \begin{equation}\label{qqqq}
  u'(r)\geq c r^{\epsilon}.
 \end{equation}
 Then the longtime solution to mean curvature flow with initial data $(y,u(|y|))$ must be Type IIb.
\end{thm}
\begin{rem}
	(1) Theorem \ref{application_2} is a corollary of more generalized Theorem \ref{main1} in section 3, which is not only valid for the entire graphs.
	
	(2) Ecker and Huisken's result \cite{EH1} showed that
	the mean curvature flow for the entire graph which has the bounded gradient must be Type III (Corollary 4.4 in \cite{EH}). This implies that Theorem \ref{application_2} does not hold if $\epsilon\leq 0$.

\end{rem}	
  
 Applying  Ecker and Huisken's result in \cite{EH1}  and
 summarizing the results in this paper, we can prove  the following interesting table in section \ref{table}.
\begin{table}[ht]
\caption{Mean curvature flow for convex entire graph $(y,u(|y|))$ over $\mathbb{R}^{n}$ with $u'(r)=O(r^{\alpha-1})$, $n\geq2$.
 }
   \begin{tabular}{|c|c|c|c|}
     \hline
      &  \tabincell{c}{ When $t\to+\infty$,\\ for any $ p\in M^n$}   & Singularity Type & Asymptotic behavior \\
     \hline
     $\alpha\leq1$ & $|A|(p,t)\to 0 $ & Type III & \tabincell{c}{ rescaled sequence (\ref{scaling})\\ subconverges to \\a self-expander} \\
     \hline
     $1<\alpha<2$ &$|A|(p,t)\to 0 $& \multirow{3}{*}{Type IIb} & \multirow{3}{*}{\tabincell{c}{ rescaled sequence (\ref{TypeIIb_rescaling})\\ subconverges to\\ a translating soliton}} \\
     \cline{1-2}
     $\alpha=2$ & $0<c_p\leq|A|(p,t)\leq C$ &  &  \\
     \cline{1-2}
     $\alpha>2$ & $|A|(p,t)\to +\infty$ &  &  \\
     \hline
   \end{tabular}
\end{table}

\newpage

Finally, we study the asymtotic behavior of the Type IIb mean curvature flow.
We show that Huisken and Sinestrari's result for Type II also holds for 
$\delta$-Andrews' noncollapsed Type IIb mean curvature flow.
\begin{thm}\label{andrew_application2}
If the Type IIb mean curvature flow is mean-convex and satisfies the $\delta$-Andrews' noncollapsing condition (see Definition \ref{andrew_condition}) at $t=0$, then  limit $M_{\infty}$ of the rescaled sequence (\ref{TypeIIb_rescaling})
  is a weakly convex and hence $M_{\infty}$ is the tanslating soliton
splitting as $\mathbb{R}^{n-k}\times \Sigma^k$, where $\Sigma^k$ is strictly convex.
\end{thm}

The structure of this paper is as follows. In section 2 we give proof of Theorem \ref{application_2}. 
In section \ref{table} we give the proof of  Table 1.
In section \ref{sec_3} we extend Andrews'proof of noncollapsing theorem to noncompact case (Theorem \ref{BenAndrew}) and give the proof of Theorem \ref{andrew_application2}. In this section we also  prove an estimate (\ref{bound_3}) which is used in the proof of Theorem \ref{main1}.
 In Appendix 
we show that convex normalized Type III mean curvature flow  (\ref{normalized_mcf}) 
subconverges a non-flat self-expander which is mentioned in the introduction.

\section{Proof of Theorem \ref{application_2}}

In this section we obtain an generalized result (Theorem \ref{main1})  to show that the longtime solution to mean curvature flow satisfying certain conditions would be Type IIb. Theorem 1.2 is a corollary of Theorem \ref{main1}.

We first recall the definition of $\delta$-Andrews' noncollapsing condition.
\begin{defn}\cite{A}\cite{SW}\cite{Hasl}\label{andrew_condition}
  ($\delta$-Andrews' noncollapsing condition) If  $M$ is a smooth, complete,
mean-convex embedded hypersurface (possibly noncompact) with $M=\partial K$, then $M$
satisfies the $\delta$-Andrews' noncollapsing condition for $\delta>0$ if for every $p\in M$
 there are closed balls
$\bar{B}_{Int}\subseteq K$ and $\bar{B}_ {Ext}\subseteq \mathbb{R}^{n+1}\backslash Int(K)$ of radius at least $\frac{\delta}{H(p)}$ that are tangent to $M$ at $p$ from the interior and exterior of $M$ respectively.
\end{defn}
\begin{rem}
Compare to the definitions of $\delta$-Andrews' noncollapsing condition used in \cite{A}\cite{SW}\cite{Hasl}, the hypersurfaces satisfying the $\delta$-Andrews' noncollapsing condition in Definition \ref{andrew_condition} could be noncompact. Clearly, the closed mean-convex embedded
hypersurfaces always satisfying the $\delta$-Andrews' noncollapsing condition
for some $\delta>0$. Howerver, there are some  noncompact  mean-convex embedded
hypersurfaces, for example the grim reaper, do not satisfy the $\delta$-Andrews' noncollapsing condition.
\end{rem}

The main theorem of this section is the following
\begin{thm}\label{main1}
(A)
Let $M_t$ be a solution to the mean curvature flow  for immersed noncompact hypersurface in $\mathbb{R}^{n+1}$.
 Suppose that there exist a fixed vector $\omega$ and constants $C_1$, $C_2$  such that
\begin{equation}\label{pinching_condition_1}
C_1H\leq  W\leq C_2H,
\end{equation}
at $t=0$, where $ W=\langle \nu,\omega\rangle$ and $\omega$ is a constant vector field, and
\begin{align}\label{condtion2}
&\text{$M_0$ can be contained in the half-plane $\mathbb{R}^{n+1}_+$ with its boundary
 $\partial\mathbb{R}^{n+1}_+$ not }\nonumber\\
&\text{parallel     to   $\omega$.}
\end{align}
Then the mean curvature flow $M_t$ can not be Type III. In addition, if $M_0$ is an entire graph, then the longtime solution to mean curvature flow for $M_0$ must be Type IIb.\\
(B) If $M_0$ is mean-convex and  satisfies the $\delta$-Andrews' noncollapsing condition, the condition (\ref{condtion2}) and
\begin{equation}\label{pinching_condition_2}
  |W|\leq C(1+|x|^2)^{\frac{1-\epsilon}{2}}H,
\end{equation}
at $t=0$ for some $\epsilon>0$, then the conclusions of (A) still hold.
\end{thm}
\begin{rem}
(1) Let $M_0$ be the hyperplane in $\mathbb{R}^{n+1}$. We choose $\omega$ be a fixed vector which is parallel to the hyperplane, and hence $H=W\equiv 0$. Then $M_0$
 satisfies the condition (\ref{pinching_condition_1}) rather than condition (\ref{condtion2}), and clearly the mean curvature flow with initial data $M_0$ is not Type IIb.
  This shows that the  condition (\ref{condtion2}) in Theorem \ref{main1} can not be removed.

(2) 
Notice that he grim reaper satisfies the conditions of Theorem \ref{main1} (A) rather than the conditions of  Theorem \ref{main1} (B).

	(3) Again by Ecker and Huisken's result \cite{EH1} that
	the mean curvature flow for the entire graph which has the bounded gradient must be Type III (Corollary 4.4 in \cite{EH}) we know that Theorem \ref{application_2} (B) does not hold if $\epsilon\leq 0$ in (\ref{pinching_condition_2})(see the calculations in the proof of  Theorem \ref{application_2}).
  \end{rem}

\textbf{Proof of Theorem \ref{main1}.} (A)
We argue by contradiction.
Assume that $M_t=x(M^n,t)$ is the Type III solution to the mean curvature flow  with $M_0$ satisfying conditions (\ref{pinching_condition_1}) and (\ref{condtion2}).
Let $\tilde{x}(\cdot,s)$ be its corresponding normalized mean curvature flow (\ref{normalized_mcf}). Denote $\tilde{M}_s=\tilde{x}(M^n,s)$. 
Since 
\begin{equation}\label{preserved}
(\frac{\partial}{\partial t}-\Delta)(H-CW)=|A|^2(H-CW)
\end{equation}
holds for any constant $C$ and by
the maximum principle (Theorem 4.3 in \cite{EH1})
 that $C_1H\leq  W\leq C_2H$ holds for all $t\geq 0$.
By (\ref{scaling}) and Type III condition, we have $\sup\limits_{M^n\times
[0,\infty)}|\tilde{A}|^2=C<\infty$ , $\tilde{W}=W$ and $\tilde{H}=\sqrt{2t+1}H$, where $\tilde{W}=\langle \tilde{\nu},\omega\rangle$ and $\tilde{A}$, $\tilde{H}$ are the second fundamental form and mean curvature for $\tilde{x}(\cdot,s)$.
Hence
\begin{equation}
  -\frac{nCC'}{\sqrt{2t+1}}\leq |\tilde{W}| \leq \frac{nCC'}{\sqrt{2t+1}},
\end{equation}
where $C'=\max\{|C_1|,|C_2|\}$, which implies that
\begin{equation}\label{Wtozero}
|\tilde{W}|\to 0
\end{equation}
as $s\to+\infty$.

We calculate that
\begin{equation}\label{eq_31}
\frac{\partial }{\partial s} |\tilde{x}|^2=2\langle\vec{\tilde{\mathbf{H}}},\tilde{x}\rangle-2|\tilde{x}|^2.
\end{equation}
It follows that
$$|\tilde{x}|(p,s)\leq e^{-s}|\tilde{x}_{0}|(p)+nC (1 - e^{-s}),$$
for any $p\in M^n$.
Hence for any fixed point $p\in M^n$,
\begin{equation}\label{bound}
|\tilde{x}|(p,s)\leq nC+1
\end{equation}
for $s$  sufficiently large.

We next use the technique in \cite{CH} to get an intrinsic limit.
Since  $|\tilde{A}|\leq \sqrt{C}$ for $[0,+\infty)$, the injectivity radius of $(M^n,\tilde{g}(s))$ at $p$ has the positive lower bound only depending on $C$.
 We also have $|\tilde{\nabla}^m \tilde{A}|\leq C_m$ on $[0,+\infty)$ by the standard estimates by Ecker and Huisken \cite{EH1}.
 It follows that there exists a sequence $s_i\to+\infty$ such that
$(M^n,\tilde{g}(s_i),p)$  converges to a complete manifold $(M^n_{\infty},\tilde{g}_{\infty},p_{\infty})$ in $C^{\infty}$ pointed Gromov-Hausdorff sense. That is, for any $r>0$ and $i$, there exist embeddings $\phi_i:B_{\tilde{g}_{\infty}}(p_{\infty},r)\to M^n$ such that
$\phi_i(p_{\infty})=p$ and $\phi_i^*\tilde{g}_i$ converges smoothly to $\tilde{g}_{\infty}$ on $B_{\tilde{g}_{\infty}}(p_{\infty},r)$, where $B_{\tilde{g}_{\infty}}(p_{\infty},r)$ is the intrinsic ball on $(M^n_{\infty},\tilde{g}_{\infty},p_{\infty})$.
Since
\begin{equation}\label{eq_xxxxx}
(\phi_i^*\tilde{g}_i)_{kl}=\partial_k \tilde{x}_i\circ \phi_i\cdot \partial_l \tilde{x}_i\circ \phi_i,
 \end{equation}
 the first derivatives of $\tilde{x}_i\circ \phi_i$ are uniformly bounded on $B_{\tilde{g}_{\infty}}(p_{\infty},r)$. Moreover, by the Gauss-Weingarten relations and  $|\tilde{\nabla}^m \tilde{A}|\leq C_m$,
\begin{align*}
&\partial_k\partial_l F=\Gamma^q_{kl}\partial_q F-h_{kl} \nu,\\
&\partial_k\nu=h_{kl}\tilde{g}^{lq} \partial_q F,
\end{align*}
here $F=\tilde{x}_i\circ \phi_i$ and $g=\phi_i^*\tilde{g}_i$ in our case,
we have all derivatives of $\tilde{x}_i\circ \phi_i$ are uniformly bounded on $B_{\tilde{g}_{\infty}}(p_{\infty},r)$.
Notice that $\tilde{x}_i\circ \phi_i:B_{\tilde{g}_{\infty}}(p_{\infty},r)\to \mathbb{R}^{n+1}$ and $\tilde{x}_i\circ\phi_i$ is uniformly bounded at $p_{\infty}$ by (\ref{bound}). It follows that $\tilde{x}_i\circ \phi_i$ subconverges smoothly to map $\tilde{x}_{\infty}:B_{\tilde{g}_{\infty}}(p_{\infty},r)\to \mathbb{R}^{n+1}$. Let $i\to \infty$ in (\ref{eq_xxxxx}), we get
$(\tilde{g}_{\infty})_{kl}=\partial_k \tilde{x}_{\infty}\cdot \partial_l \tilde{x}_{\infty}$.  Since
$(\tilde{g}_{\infty})_{kl}$ is positive definite matrix, $\tilde{x}_{\infty}$ is an immersion on $B_{\tilde{g}_{\infty}}(p_{\infty},r)$. By the standard diagonal argument and taking $r=r_l\to \infty$, we get the complete immersion $\tilde{x}_{\infty}:M^n_{\infty}\to \mathbb{R}^{n+1}$.

   By (\ref{Wtozero}) we conclude that $\tilde{W}_{\infty}=0$, which implies $\tilde{x}_{\infty}$ is complete cylindrical hypersurface, i.e. the hypersurface splitting as $\Sigma^{n-1}\times l$  with $l$ is a straight line parallel to $\omega$. Hence $\tilde{x}_{\infty}(M^n_{\infty})$ must cross over the plane $x_{n+1}=0$. However the plane $x_{n+1}=0$
 is steady under the normalized mean curvature flow (\ref{scaling}), which implies $\tilde{M}_{s}$ is contained in $\mathbb{R}^{n+1}_+$ for all $s\geq 0$. It follows that
 $\tilde{x}_{\infty}(M^n_{\infty})$ is contained in $\mathbb{R}^{n+1}_+$. Then we get a contradiction.

(B)
We argue by contradiction.
Assume that $M_t=x(M^n,t)$ is the Type III solution to the mean curvature flow  with $M_0$ satisfying conditions $\delta$-Andrews' noncollapsing condition, (\ref{condtion2}) and (\ref{pinching_condition_2}).
 Under the  normalized mean curvature flow (\ref{normalized_mcf}), we have
 $$(\frac{\partial}{\partial s}-\tilde{\Delta})\tilde{W}=|\tilde{A}|^2\tilde{W},$$
 and
 $$(\frac{\partial}{\partial s}-\tilde{\Delta})\tilde{H}=|\tilde{A}|^2\tilde{H}+\tilde{H}.$$
 It follows from Corollary \ref{nabla1} in section \ref{sec_3} that
 \begin{equation}\label{bound_3}
 \frac{|\tilde{\nabla}^l \tilde{A}|}{\tilde{H}^{l+1}}\leq C_l(\delta).
 \end{equation}
 Hence we
 have
 \begin{equation}\label{bound_1}
 \frac{|\tilde{\nabla} \tilde{H}|}{\tilde{H}}\leq m_1,
 \end{equation}
 and
 \begin{equation}\label{bound_2}
 \frac{|\frac{\partial}{\partial s }\tilde{H}|}{\tilde{H}}\leq m_2,
 \end{equation}
 where $m_1$ and $m_2$ are positive constants depending on $\delta$ and $\sup\limits_{M^n\times
 [0,\infty)}|\tilde{A}|$.
 Then
 \begin{align*}
 \left(\frac{\partial }{\partial s}-\tilde{\Delta}\right)\,\frac{\tilde{W}^2}{\tilde{H}^2}&=\frac{(\frac{\partial }{\partial s}-\tilde{\Delta})\tilde{W}^2}{\tilde{H}^2}
 -\frac{\tilde{W}^2(\frac{\partial }{\partial s}-\tilde{\Delta})|\tilde{H}|^2}{\tilde{H}^4}+2\tilde{\nabla} \log \tilde{H}^2\cdot\tilde{\nabla} \frac{\tilde{W}^2}{\tilde{H}^2}\\
 &=-2\frac{\tilde{W}^2}{\tilde{H}^2} -\frac{2|\tilde{\nabla} \tilde{W}|^2}{\tilde{H}^2}
 +\frac{2\tilde{W}^2|\tilde{\nabla} \tilde{H}|^2}{\tilde{H}^4}+4\frac{\tilde{\nabla} \tilde{H}}{\tilde{H}}\cdot\tilde{\nabla} \frac{\tilde{W}^2}{\tilde{H}^2}\\
 &=-2\frac{\tilde{W}^2}{\tilde{H}^2} -\frac{2|\tilde{\nabla} \tilde{W}|^2}{\tilde{H}^2}
 -\frac{2\tilde{W}^2|\tilde{\nabla} \tilde{H}|^2}{\tilde{H}^4}+4\frac{\tilde{\nabla} \tilde{H}}{\tilde{H}}\cdot \frac{\tilde{W}\tilde{\nabla}\tilde{W}}{\tilde{H}^2}\\
 &\ \ +2\frac{\tilde{\nabla} \tilde{H}}{\tilde{H}}\cdot\tilde{\nabla} \frac{\tilde{W}^2}{\tilde{H}^2}\\
 &\leq -2\frac{\tilde{W}^2}{\tilde{H}^2}+2\frac{\tilde{\nabla} \tilde{H}}{\tilde{H}}\cdot\tilde{\nabla} \frac{\tilde{W}^2}{\tilde{H}^2}.
 \end{align*}
  We follow an idea of Ecker and Huisken in \cite{EH}.
 Define
 $\eta_{a }(\tilde{x})=1+a |\tilde{x}|^2$, $\rho(\tilde{x},s)=\eta_{a }^{\epsilon-1}e^{\beta s}$, where $a$ is positive constant to be determined later.
 We calculate that
 $$(\frac{\partial}{\partial s}-\tilde{\Delta})\eta_{a }=-2a  (|\tilde{x}|^2+n),$$
 and hence
 $$(\frac{\partial}{\partial s}-\tilde{\Delta})\rho\leq (\beta+2(1-\epsilon)(a  n+1))\rho.$$
 Moreover, $|\tilde{\nabla}\eta_{a }|^2\leq 4a \eta_{a }$ and $|\tilde{\nabla}\rho|\leq 2a ^{\frac{1}{2}}\rho$.
We compute that
 \begin{align*}
 (\frac{\partial}{\partial s}-\tilde{\Delta})\frac{\tilde{W}^2}{\tilde{H}^2}\rho &=
 \rho(\frac{\partial}{\partial s}-\tilde{\Delta})\frac{\tilde{W}^2}{\tilde{H}^2}+\frac{\tilde{W}^2}{\tilde{H}^2}(\frac{\partial}{\partial s}-\tilde{\Delta})\rho
 -2\langle \tilde{\nabla}\frac{\tilde{W}^2}{\tilde{H}^2},\tilde{\nabla}\rho\rangle\\
 &\leq (\beta+2a  n-2\epsilon)\frac{\tilde{W}^2}{\tilde{H}^2}\rho+2\rho\langle  \tilde{\nabla}\log\tilde {H},\tilde{\nabla}\frac{\tilde{W}^2}{\tilde{H}^2}\rangle-2\langle \tilde{\nabla}\frac{\tilde{W}^2}{\tilde{H}^2},\tilde{\nabla}\rho\rangle\\
 &=(\beta+2a  n-2\epsilon)\frac{\tilde{W}^2}{\tilde{H}^2}\rho+2  (\tilde{\nabla}\log\tilde {H}-\rho^{-1}\tilde{\nabla}\rho)\cdot\tilde{\nabla}(\frac{\tilde{W}^2}{\tilde{H}^2}\rho)\\
 &\ \ \ -2\frac{\tilde{W}^2}{\tilde{H}^2}\langle  \tilde{\nabla}\log\tilde {H},\tilde{\nabla}\rho\rangle
 +2\frac{\tilde{W}^2}{\tilde{H}^2}\rho^{-1}|\tilde{\nabla \rho}|^2\\
 &\leq (\beta+2a  n+4m_1a ^{\frac{1}{2}}+8a -2\epsilon)\frac{\tilde{W}^2}{\tilde{H}^2}\rho\\
 &\ \ \ \ +2  (\tilde{\nabla}\log\tilde {H}-\rho^{-1}\tilde{\nabla}\rho)\cdot\tilde{\nabla}(\frac{\tilde{W}^2}{\tilde{H}^2}\rho).
 \end{align*}
 Taking $a $ and $\beta$ small enough such that $\beta+2a  n+4m_1a ^{\frac{1}{2}}+8a -2\epsilon<0$, we have
 \begin{equation}\label{evolution}
 (\frac{\partial}{\partial s}-\tilde{\Delta})\frac{\tilde{W}^2}{\tilde{H}^2}\rho\leq 2  (\tilde{\nabla}\log\tilde {H}-\rho^{-1}\tilde{\nabla}\rho)\cdot\tilde{\nabla}(\frac{\tilde{W}^2}{\tilde{H}^2}\rho),
 \end{equation}
 with $|\tilde{\nabla}\log\tilde {H}-\rho^{-1}\tilde{\nabla}\rho|\leq m_1+2a ^{\frac{1}{2}}$.
 \begin{align*}
 \frac{\partial}{\partial s}(\frac{\tilde{W}^2}{\tilde{H}^2}\rho) &= 2\frac{\langle\tilde{\nabla}\tilde{H},\omega\rangle\tilde{W}}{\tilde{H}^2}\rho
 -2\frac{\frac{\partial}{\partial s}\tilde{H}}{\tilde{H}}(\frac{\tilde{W}^2}{\tilde{H}^2}\rho)+
 (\epsilon-1)\frac{\frac{\partial}{\partial s}\eta_{a }}{\eta_{a }}(\frac{\tilde{W}^2}{\tilde{H}^2}\rho)+\beta\frac{\tilde{W}^2}{\tilde{H}^2}\rho\\
 &\leq 2C_1(\delta)e^{\beta s}+C'\frac{\tilde{W}^2}{\tilde{H}^2}\rho,
 \end{align*}
 where $C'$ is positive constant depends on $\epsilon, \beta,m_2,a$ and $\sup\limits_{M^n\times
 [0,\infty)}|\tilde{A}|$. So we get that $\sup\limits_{\tilde{M}_s}\frac{\tilde{W}^2}{\tilde{H}^2}\rho$ is finite at each time slice. Applying the maximum
 principle to (\ref{evolution}) (see Corollary 1.1 in \cite{EH}) we have
 \begin{equation}
  \sup\limits_{\tilde{M}_s}\frac{\tilde{W}^2}{\tilde{H}^2}( 1+a |\tilde{x}|^2)^{\epsilon-1}
  \leq e^{-\beta s}\sup\limits_{\tilde{M}_0}\frac{\tilde{W}^2}{\tilde{H}^2}( 1+a |\tilde{x}|^2)^{\epsilon-1}.
 \end{equation}
 Moreover, $\tilde{H}$ is uniformly bounded by the Type III condition, hence $\tilde{W}\to 0$ as $s\to+\infty$ on any compact set.
 Now we can use the same arguments in the proof of Theorem \ref{main1} (A) to get a contradiction.
 $\Box$

Next we give the proof of Theorem \ref{application_2}.\\
\textbf{Proof of Theorem \ref{application_2}.}  
 We choose $\omega=-e_{n+1}$. Define $r=|y|$.  
 By direct calculations,
  \begin{equation}\label{calculation1}
     W=\frac{1}{(1+u'(r)^2)^{\frac{1}{2}}}
  \end{equation}
 and
 \begin{equation}\label{calculation3}
 H=\frac{u''(r)}{(1+u'(r)^2)^{\frac{3}{2}}}+\frac{(n-1)u'(r)}{r(1+u'(r)^2)^{\frac{1}{2}}}.
 \end{equation} 
  Since $(y,u(|y|))$ is convex,  $u''(r)>0$ and $u'(r)>0$.
  By (\ref{calculation1}) and (\ref{calculation3}), we have
   \begin{align*}
     \frac{H}{W}(1+|x|^2)^{\frac{1-\epsilon}{2}}&\geq \frac{(n-1)u'(r)}{r}(1+r^2)^{\frac{1-\epsilon}{2}}\\
      &\geq c(n-1)
   \end{align*}
   for $r\geq N$. If the graph $(y,u(|y|))$ is smooth and covex, then $H$ and $W$ are positive and continuous.  Then
   (\ref{pinching_condition_2}) is satisfied for $r\leq N$.
   By the straightforward computation, we can embed the ball of radius at least $\frac{r\sqrt{1+u'(r)^2}}{u'(r)}$ that are tangent to graph at $|y|=r$ from the interior and exterior of the
   graph respectively. Combining this with (\ref{calculation3}), we
   conclude that $\delta$-Andrews' noncollapsing condition is satisfied.
   Hence the mean curvature flow for the graph $(y,u(|y|))$ over $\mathbb{R}^n$, $n\geq 2$, satisfying (\ref{qqqq})
    must be Type IIb by
   Theorem \ref{main1} (B). 
$\Box$

\section{Proof of Table 1}\label{table}

In this section we give the proof of Table 1.
First we prove the following theorem.
\begin{thm}\label{main3}
Let $M_t$ be the solution to mean curvature flow with initial data $M_0$ is the convex entire graph $(y,u(|y|))$ over $\mathbb{R}^{n}$ with $u'(r)=O(r^{\alpha-1})$, $\alpha> 1$ and $n\geq 2$. If $\alpha> 2$, then the $|A|(p,t)\to+\infty$ as $t\to+\infty$ for any $p\in M^n$.
If $1<\alpha <2$, then $|A|(p,t)\to 0$ as $t\to+\infty$ for any $p\in M$. If $\alpha=2$, then $0<c_p\leq |A|(p,t)\leq C$, where $c_p$ is a positive constant depending on $p$.
\end{thm}
\begin{proof}
In \cite{SW} Altschuler, Steven J.,  L. F. Wu proved that there exists  rotational symmetric convex translating graph $X_0=(y,g_N(|y|))$ which is called bowl soliton satisfying
\begin{equation*}
  \vec{H}=V^{\perp}.
\end{equation*}
with $V=Ne_{n+1}$ and $\lim\limits_{r\to +\infty}\frac{g_N'(r)}{r}=N$ for any $N>0$\footnote{If there  exists  rotational symmetric convex translating graph $X_0=(y,g(|y|))$  satisfying
	$
	\vec{H}=V^{\perp}.
	$
	with $V=e_{n+1}$ and $\lim\limits_{r\to +\infty}\frac{g'(r)}{r}=1$, then by rescaling  there exists  rotational symmetric convex translating graph $X_0=(y,g_N(|y|))$  satisfying
	$
	\vec{H}=V^{\perp}.
	$
	with $V=Ne_{n+1}$ and $\lim\limits_{r\to +\infty}\frac{g_N'(r)}{r}=N$ for any $N>0$. },
 and hence it is "asymptotic to" paraboloid as $r\to+\infty$ (One can also see \cite{C} 
which
gives an explicit calculation of the asymptotics of the bowl soliton and as well as a clearer
construction of the bowl soliton).
 The solution to mean curvature flow
with initial data $X_0$ is translating as $X_t(\phi^*_t(p),t)=X_0(p)+Ne_{n+1}t$.

If $\alpha>2$, then for any large $N$ there exists positive constant $C_N$ such that
$u(|y|)+C_N>g_N(|y|)$. It implies that
$M_0+C_Ne_{n+1}$  is above $X_0$. Now we argue by contradiction.
Assume that there exists $p\in M^n$ such that $|A|(p,t)\leq C_p$. By (\ref{mcf}),
$$|(x(p,t)+C_Ne_{n+1})-(x(p,0)+C_Ne_{n+1})|\leq C_pt.$$
That implies
$$B(x(p,0)+C_Ne_{n+1},C_pt)\cap (M_t+C_Ne_{n+1})\neq \emptyset.$$
However, taking $N=2C_p$, $X_t$ is translating in the $e_{n+1}$ direction with velocity $2C_p$ which implies that
$B(x(p,0)+C_Ne_{n+1},C_pt)$ will stand below $X_t$ for $t$ sufficient large. But
$M_t+C_Ne_{n+1}$ is above $X_t$ for all $t\geq 0$ and hence
$$B(x(p,0)+C_Ne_{n+1},C_pt)\cap M_t = \emptyset$$ for  $t$ sufficient large. Then we obtain a contradiction.

If $\alpha=2$, by (\ref{calculation1}) and (\ref{calculation3}), a direct calculation shows that
$0< H\leq CW$ at $t=0$. It follows from (\ref{preserved}) that
 $0< H\leq CW$ for $t\geq 0$. By the convexity, $|A|\leq n H\leq nCW\leq nC$.
 We use the contradictory arguments to get the lower bound for $|A|(p,t)$.
Assume that there exists $p\in M^n$ such that $|A|(p,t)\to 0$. We have for any small $\epsilon>0$ there exists $t_0>0$ 
such that $|A|(p,t)\leq \epsilon$ for $t\geq t_0$. Then by (\ref{mcf})
\begin{equation}\label{666}
 |x(p,t)-x(p,t_0)|\leq \epsilon t.
\end{equation}
Assume that $\lim\limits_{r\to\infty}\frac{u'(r)}{r}=\beta$. Taking $N=\beta$, there exists positive constant $C_1$ such that
$u(|y|)+C_1>g_1(|y|)$. Hence
$M_0+C_1e_{n+1}$ is above $X_0$.
Notice that $X_t$ is translating in the $e_{n+1}$ direction with velocity $\beta$ which implies that
$B(x(p,t_0)+C_1e_{n+1},\epsilon t)$ will stand below $X_t$ for $\epsilon<\beta$ and $t$ sufficient large. But
$M_t+C_1e_{n+1}$ is above $X_t$ for all $t\geq 0$ and hence
$$B(x(p,t_0)+C_1e_{n+1},\epsilon t)\cap (M_t+C_1e_{n+1}) = \emptyset$$ for $\epsilon<\beta$ and  $t$ sufficient large, which contradicts to (\ref{666}).

Finally we consider  the case $1<\alpha<2$. 
Since $M_t$ is  symmetric to $x_{n+1}$-axis and strictly convex for any $t\geq 0$,
for any $t$ there exists a point $q_t\in M_t$ achieves the unique minimum of the graph function for $M_t$. Moreover, $q_t$ always
stays on the $x_{n+1}$-axis, otherwise by the symmetry there would have more than one minimum point on $M_t$.  Denote $x(p_0,0)=q_0$ for some $p_0\in M^n$.
Since the all unit normal vectors at $q_t$ are $-e_{n+1}$,  we have $x(p_0,t)=q_t$ by (\ref{mcf}) for all $t\ge 0$.
Hence $\nu(p_0,t)=-e_{n+1}$ for any $t\geq 0$.
If $H(p_0,t)\geq c>0$, then by (\ref{mcf})
$$|x(p_0,t)-x(p_0,0)|\geq ct.$$
Since for any small $\epsilon>0$,  there exists positive constant $C_N$ such that
$ g_{\epsilon}(|y|)+C_N\geq u(|y|)$. Hence for any small $\epsilon>0$ we have
$X_0+C_Ne_{n+1}$ is above $M_0$.
Note that $X_t$ is translating in the $e_{n+1}$ direction with velocity $\epsilon$
and
$M_t$ is always below $ X_t+C_Ne_{n+1}$ for all $t\geq 0$ and hence
$$|x(p_0,t)-x(p_0,0)|\leq \epsilon t+C_N.$$
Then we obtain a contradiction when $\epsilon<c$ and $t$ is sufficient large. Hence $H(p_0,t)\to 0$ as $t\to+\infty$. It follows from $\frac{|\nabla{H}|}{H^2}\leq C_1(\delta)$ and the convexity
that $H(p,t)\to 0$ as $t\to+\infty$ for any $p\in M^n$. By the convexity, we conclude that $|A(p,t)|\to 0$ as $t\to+\infty$ for any $p\in M^n$.
$\Box$

Finally, we give the proof of Table 1.

\textbf{Proof of Table 1.}
Since
 the convex entire graph  $(y,u(|y|))$ with $u'(r)=O(r^{\alpha-1})$  over $\mathbb{R}^{n}$ for $0\leq\alpha\leq 1$  satisfies the condition (\ref{linear_growth}),  the mean curvature flow for such graph is Type III by Corollary 4.4 in \cite{EH}. Hence
 the normalized mean curvature flow (\ref{normalized_mcf})
converges as $s\to \infty$ to a self-expander
by Theorem \ref{limit} in the appendix. The rest of Table 1 follows from Theorem \ref{application_2} and Theorem \ref{main3}.
\end{proof}

\section{Asmptotic behavior of the Type IIb mean curvature flow}\label{sec_3}

In this section we study the asymptotic behavior of Type IIb mean curvature flow and prove the estimate (\ref{bound_3}) which is used in the proof of Theorem \ref{main1}. Huisken and Sinestrari (\cite{HS1} \cite{HS2}) proved blow-up  sequence of the compact mean curvature flow with positive mean curvature subconverges to a weakly convex  limit, which implies that
rescaled sequence (\ref{typeii}) of Type II compact mean curvature flow subconverges to a
translating soliton if $M_0$ is mean-convex. Haslhofer and Kleiner \cite{Ha1}
gave a short proof of Huisken and Sinestrari's result based on the noncollapsing theorem of the mean  curvature flow. Notice that Haslhofer and Kleiner's locally blow-up arguments in \cite{Ha1} also valid if noncollapsing theorem of the mean  curvature flow holds for noncompact hypersurface. So we only need to extend noncollapsing theorem of the mean  curvature flow to noncompact case, then we can show  Huisken and Sinestrari's result also holds for 
Type IIb case.

In \cite{A} Andrews gave a short quantitative argument about the result proved by Sheng and Wang \cite{SW1} that the compact mean-convex mean curvature flow satisfies $\delta$-Andrews' noncollapsing condition for all time for some $\delta>0$.
We extend Andrews' arguments to  noncompact case.
Precisely, we get
  the following theorem.
  \begin{thm}\label{BenAndrew}
  	Let $M_t$ be a solution to the mean curvature flow for mean-convex complete noncompact embedded hypersurface in $\mathbb{R}^{n+1}$ with bounded second fundamental form at each
  	time slice.
  	If $M_0$ satisfies the $\delta$-Andrews' noncollapsing condition, then it remains so under
  	the mean curvature flow.
  \end{thm}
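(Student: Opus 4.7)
The plan is to adapt Andrews' two-point maximum principle to the noncompact setting by perturbing the Andrews quantity with an auxiliary weight that grows at infinity, forcing the relevant supremum to be attained at a finite pair of points, and then passing to the limit. Recall that interior $\delta$-noncollapsing at $(x,t)$ is equivalent to
$$Z(x,y,t):=\frac{2\langle x-y,\nu(x,t)\rangle}{|x-y|^{2}}\le\frac{H(x,t)}{\delta}\qquad\text{for every }y\in M_t,\ y\ne x,$$
and the exterior version is the mirror inequality with the opposite sign. In the compact case Andrews argues that if the inequality first failed at a touching pair $(x_0,y_0,t_0)$ with $x_0\neq y_0$, then the first- and second-order maximum conditions in both variables, combined with the computed evolutions of $Z$ and $H/\delta$, produce a \emph{strict} parabolic inequality $(\partial_t-\mathcal{L})(Z-H/\delta)<0$ at $(x_0,y_0,t_0)$ for a certain product elliptic operator $\mathcal{L}$ on $M_t\times M_t\setminus\Delta$, contradicting the maximum.

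To promote this to the noncompact setting I would fix an interval $[0,T_0]\subset[0,T)$ on which $|A|$ is bounded by some $C_0$, set $\phi(z):=\sqrt{1+|z|^{2}}$, and consider
$$Q_\epsilon(x,y,t):=Z(x,y,t)-\tfrac{1}{\delta}H(x,t)-\epsilon(1+t)\bigl(\phi(x)+\phi(y)\bigr),\qquad \epsilon>0.$$
Using the identities $(\partial_t-\Delta_{M_t})|x|^{2}=-2n$ and $|\nabla^{M_t}\phi|\le 1$ one obtains a uniform bound $|(\partial_t-\Delta_{M_t})\phi|\le C(n)$. Since $|Z|\le |A|$ and $H\le\sqrt{n}|A|$ are uniformly bounded on $[0,T_0]$, the weight $-\epsilon(1+t)(\phi(x)+\phi(y))$ dominates at infinity and $Q_\epsilon$ attains its supremum on $M_t\times M_t\setminus\Delta$ at a pair of finite points for every $t\in[0,T_0]$. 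At $t=0$ the hypothesis gives $Q_\epsilon<0$ everywhere.

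Suppose $Q_\epsilon$ first touches $0$ at $(x_0,y_0,t_0)$. The diagonal $x_0=y_0$ is excluded by letting $y\to x$: smoothly $Z(x,y,t)\to H(x,t)$, so the unperturbed quantity tends to $H(1-1/\delta)\le 0$ when $\delta\le 1$, and the $\delta>1$ case reduces to $\delta=1$ by monotonicity of the Andrews condition in $\delta$. Away from the diagonal I would run Andrews' two-point computation at $(x_0,y_0,t_0)$ using the first- and second-order conditions for $Q_\epsilon$; these differ from those for $Z-H/\delta$ by $\epsilon(1+t_0)\nabla\phi$ in first order and by a corresponding bounded perturbation in second order, so the strict Andrews inequality degrades only by an $O(\epsilon)$ term:
$$(\partial_t-\mathcal{L})\bigl(Z-\tfrac{1}{\delta}H\bigr)(x_0,y_0,t_0)\le O(\epsilon).$$
Combining with the uniform bound on $(\partial_t-\Delta)\phi$ yields
$$(\partial_t-\mathcal{L})Q_\epsilon\le O(\epsilon)-\epsilon(\phi(x_0)+\phi(y_0))-\epsilon(1+t_0)O(1)\le -\tfrac{\epsilon}{2}<0$$
once $\epsilon$ is small, contradicting the maximum-point inequality $(\partial_t-\mathcal{L})Q_\epsilon\ge 0$. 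Hence $Q_\epsilon\le 0$ on $[0,T_0]$; sending $\epsilon\to 0$ and $T_0\nearrow T$ proves preservation of interior noncollapsing, and the exterior case is strictly analogous.

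The principal obstacle is verifying that the perturbation can be threaded through Andrews' two-point argument without spoiling the structural identities that produce the \emph{strict} inequality: the first-order conditions for $Q_\epsilon$ differ from those for $Z-H/\delta$ by explicit $\epsilon\nabla\phi$ terms, and one must confirm that every such correction is absorbed as an $O(\epsilon)$ error uniformly on $[0,T_0]$. The bounded second fundamental form assumption enters here twice: first to ensure attainment of the supremum at finite points, and second to furnish the uniform derivative bounds needed to make the $O(\epsilon)$ error control genuinely uniform in $(x_0,y_0,t_0)$.
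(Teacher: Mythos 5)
Your overall strategy is the same as the paper's (perturb Andrews' two--point function by an $\epsilon$--weight that grows at infinity, locate a first touching point, run the two--point maximum principle, let $\epsilon\to 0$), but two steps fail as written. First, the linear--in--$t$ weight $-\epsilon(1+t)(\phi(x)+\phi(y))$ does not produce a usable strict inequality. Its contribution to $(\partial_t-\mathcal{L})Q_\epsilon$ at the touching point is $-\epsilon(\phi(x_0)+\phi(y_0))-\epsilon(1+t_0)\bigl((\partial_t-\Delta_x)\phi(x_0)+(\partial_t-\Delta_y)\phi(y_0)\bigr)$; since $(\partial_t-\Delta)\phi\le 0$, the second summand is \emph{nonnegative} and of size up to $\epsilon(1+T_0)\,C(n)$, so the total need not be negative at all. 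More importantly, every term in your estimate scales linearly in $\epsilon$: the "$O(\epsilon)$" errors coming from substituting the perturbed first--order conditions into Andrews' identity carry constants depending on $C_0$, on $1/d(x_0,y_0)$ and on $1/H(x_0)$ (these factors appear explicitly in the coefficients of $\partial Z/\partial x^i$ and $\partial Z/\partial y^q$ in the evolution identity), while your gain is only $-2\epsilon$. Taking "$\epsilon$ small" does not change the competition between two quantities both proportional to $\epsilon$. The paper resolves this by using $\xi=e^{(B+nD'+D'^{2})t+h(x)}$, which satisfies $(\partial_t-\Delta)\xi\ge B\xi$ with $B$ a \emph{free large parameter}, and by first proving that any touching point satisfies $k_1\le d\le 2K_1$ and $H(x_0)\ge k_2$ with constants independent of $B$; one then takes $B$ larger than the resulting error constant $C'$. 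You need an analogous free parameter (e.g.\ a factor $e^{Bt}$ in your weight) and the a priori bounds $d\ge k_1$, $H(x_0)\ge k_2$ before the contradiction closes.

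Second, your exclusion of the diagonal is incorrect: as $y\to x$ along a tangent direction $v$, the quantity $2\langle x-y,\nu(x)\rangle/|x-y|^{2}$ tends to the normal curvature $h_x(v,v)/|v|^{2}$, not to $H(x)$, and for a merely mean--convex hypersurface $\lambda_{\max}\le H/\delta$ is not automatic; so you have not ruled out that the supremum of $Q_\epsilon$ is approached only as $y\to x$. In the paper's normalization the added term $\epsilon\zeta/d^{2}$ blows up as $d\to 0$ and forces $Q>0$ for $d\le k_1$, which simultaneously supplies the quantitative lower bound on $d$ needed above; your weight provides no such mechanism. A further, smaller point: your barrier $\phi(z)=\sqrt{1+|z|^{2}}$ penalizes \emph{extrinsic} distance, whereas the supremum must be shown to be attained on the intrinsic manifold $M^n\times M^n$; for a complete embedded hypersurface that is not assumed properly embedded, a sequence escaping intrinsically may remain extrinsically bounded, so attainment does not follow. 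The paper uses the intrinsic barrier $h$ of Lemma 3.1 (comparable to $d_{g(t)}(O,\cdot)$ with bounded gradient and Hessian, uniformly on $[0,T]$ thanks to the curvature bounds), which avoids this issue.
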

In order to prove Theorem \ref{BenAndrew} we first need the following lemma.
\begin{lem}[Lemma 12.30 in \cite{Ricciflow}]\label{test_func}
Let $(M^n,g)$ be a complete noncompact Riemannian manifold with bounded sectional curvature $|sec|\leq k_0$ for some $k_0\geq 0$.
Then there exists constant $D=D(n,k_0)>0$ such that for any $O\in M^n$ there exists a $C^{\infty}$ function $h:M^n\to\mathbb{R}$ satisfying
\begin{equation*}
  D^{-1}(d_g(O,x)+1)\leq h(x)\leq D(d_g(O,x)+1)
\end{equation*}
and
\begin{equation*}
  |\nabla_g h|\leq D, \ \ \ \nabla_g\nabla_g h\leq D,
\end{equation*}
on $M^n$.
\end{lem}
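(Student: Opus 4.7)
The plan is to construct $h$ as a smoothing of the shifted distance function $r(x) := 1 + d_g(O,x)$. Since $r \ge 1$ everywhere, it automatically satisfies the comparability $D^{-1}(d_g(O,x)+1) \le r \le D(d_g(O,x)+1)$ with $D=1$, it has $|\nabla r| = 1$ almost everywhere, and it sidesteps the singularity of the pure distance function at $O$. The only thing to do is smooth $r$ across its cut locus while keeping its gradient and Hessian bounds under control.

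For the Hessian, I would use the Hessian comparison theorem. Because $|\mathrm{Rm}| \le k_0$, the sectional curvature is bounded below by $-k_0$, so at any point where $r$ is smooth,
\begin{equation*}
\nabla^2 r \le \sqrt{k_0}\,\coth(\sqrt{k_0}\,r)\, g \le C(n,k_0)\, g,
\end{equation*}
the second inequality using $r \ge 1$. The same bound holds in the viscosity/barrier sense across the cut locus, since the distance function is always a viscosity subsolution of the Hessian bound coming from its smooth minimizing geodesic representatives.

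Next I would smooth $r$ by Greene--Wu convolution. The assumption $|\mathrm{Rm}| \le k_0$ forces a uniform lower bound $\rho = \rho(n,k_0) > 0$ on the harmonic radius at every point of $M$ (Anderson--Cheeger), which yields a cover of $M$ by harmonic coordinate charts of fixed size in which the metric components are $C^{1,\alpha}$-close to Euclidean. On each chart I convolve $r$ against a standard Euclidean mollifier of scale $\rho/10$ and patch via a subordinate partition of unity. The resulting $h$ is smooth; standard mollifier estimates give $|h-r| \le C$ and $|\nabla h| \le C$ in the background metric, while convolution of a function with viscosity Hessian upper bound $Cg$ against a nonnegative kernel produces a classical Hessian bound of $Cg$ plus error terms controlled by the Christoffel symbols (of size $\rho^{-1}$) and by the curvature (of size $k_0$), all of which are absorbed into a single constant $C(n,k_0)$.

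The main technical obstacle is producing these estimates with constants independent of the base point, and this is exactly what the Anderson--Cheeger uniform harmonic radius bound is designed for. Once the Greene--Wu smoothing is in place with uniform constants, the growth estimate $D^{-1}(d_g(O,x)+1) \le h(x) \le D(d_g(O,x)+1)$ follows from $|h-r| \le C(n,k_0)$ by absorbing the additive error into the multiplicative constant $D$.
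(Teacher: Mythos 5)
Your overall plan---shift the distance function to $r=1+d_g(O,\cdot)$, invoke Hessian comparison to get the barrier-sense upper Hessian bound, and mollify---is the right strategy, and it is essentially the one used to establish Lemma~12.30 in the book of Chow et al.\ that the paper cites (the paper does not reprove the lemma). The Hessian comparison step and the observation that only an \emph{upper} Hessian bound is required are both correct.

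The gap is in the smoothing step. You invoke the Anderson--Cheeger uniform lower bound on the $C^{1,\alpha}$ harmonic radius, but that theorem requires, in addition to a two-sided Ricci bound, a positive lower bound on the injectivity radius (equivalently local volume noncollapsing). The hypothesis here is only $|\mathrm{Rm}|\le k_0$ and completeness, which does not control collapsing: on the flat cylinders $S^1(\varepsilon)\times\mathbb{R}^{n-1}$ with $\varepsilon\to 0$ the harmonic radius tends to $0$, yet the lemma must hold on each of them. So the claimed cover by uniformly-sized harmonic charts need not exist, and with it the chart-by-chart Euclidean convolution and the partition-of-unity patching lose control: the multiplicity of the cover and the Lipschitz constants of the cutoffs in a fixed ball become unbounded, and the cross terms $\sum\nabla^2\psi_\alpha\,h_\alpha$ and $\sum\nabla\psi_\alpha\otimes\nabla h_\alpha$ in $\nabla^2 h$ are no longer bounded in terms of $n,k_0$. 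The repair is to abandon coordinate charts and mollify directly with the Riemannian exponential map in the style of Greene--Wu: set $h(x)=\int_{T_xM} r(\exp_x v)\,\kappa(|v|)\,dv$ with a mollifier supported in $|v|<\rho$ for a fixed $\rho<\pi/\sqrt{k_0}$. This needs only a lower bound on the \emph{conjugate} radius, which does follow from $|\mathrm{Rm}|\le k_0$ via Rauch, because one only requires $\exp_x$ to be a nondegenerate immersion on $B(0,\rho)\subset T_xM$, not a diffeomorphism. The semiconcavity of $r$ then propagates to a classical Hessian upper bound on $h$ with loss controlled by $k_0$, $|h-r|\le C$ and $|\nabla h|\le C$ follow from the Lipschitz bound on $r$, and no patching is needed since $h$ is defined globally at once.
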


Next we give the proof of Theorem \ref{BenAndrew}.

\textbf{Proof of Theorem \ref{BenAndrew}.}
We follow the Andrews' calculation in \cite{A}.
Denote
\begin{equation}
  Z(x,y,t)=\frac{H(x)}{2}||X(y,t)-X(x,t)||^2+\delta\langle X(y,t)-X(x,t),\nu(x)\rangle.
\end{equation}
We will prove that if $Z\geq 0$ at $t= 0$  then $Z\geq 0$ for all $t\geq 0$ for any constant $\delta$,
as showed in \cite{A}, that implies Theorem \ref{BenAndrew}.
Write $d=d(x,y,t)=||X(y,t)-X(x,t)||^2$, $\eta(x,y,t)=\frac{X(y,t)-X(x,t)}{d}$.
We choose the normal coordinates at $x$ and $y$.
By the equations (1) and (2) in \cite{A},
\begin{equation}\label{Z_derivative_1}
  \frac{\partial Z}{\partial x^i}=-dH_x\langle\eta,\partial^x_i\rangle+\frac{d^2}{2}\nabla_iH_x+\delta d h^x_{iq}g^{qp}_x\langle \eta,\partial^x_p\rangle,
\end{equation}
\begin{equation}\label{Z_derivative_2}
  \frac{\partial Z}{\partial y^i}=dH_x\langle\eta,\partial^y_i\rangle+\delta\langle \partial^y_i, \nu_x\rangle.
\end{equation}
Choose local coordinates so that $\{\partial_i^x\}$ are orthonormal and $\{\partial_i^y\}$ are orthonormal, and $\partial^x_i=\partial^y_i$
for $i=1,\cdots,n-1$. Thus $\partial^x_n$ and $\partial^y_n$ are coplanar with $\nu_x$ and $\nu_y$. Hence $\partial^x_n-\langle\partial^x_n,\partial^y_n \rangle\partial^y_n=
\langle\partial^x_n,\nu_y \rangle \nu_y$. By the calculation in \cite{A}
\begin{align}\label{1}
&  \frac{\partial Z}{\partial t}-\sum\limits_{i,j=1}^n(g_x^{ij}\frac{\partial^2 Z}{\partial x^i\partial x^j}+2g_x^{ik}g_y^{jl}\langle\partial^x_k,\partial^y_l\rangle\frac{\partial^2 Z}{\partial x^i\partial y^i}
+g^{ij}_y\frac{\partial^2 Z}{\partial y^i\partial y^i})\nonumber\\
 &=|A^x|^2Z+2d\langle\eta,\partial^x_i-\langle\partial^x_i,\partial^y_k\rangle g_y^{kl}\partial^y_l\rangle g_x^{ij}\nabla_j H_x-2(H_x-\delta h_{nn}^x)(1-\langle\partial^x_n,\partial^y_n\rangle^2)\nonumber\\
  &=|A^x|^2Z+2d\langle\eta,\partial^x_i-\langle\partial^x_i,\partial^y_k\rangle g_y^{kl}\partial^y_l\rangle g_x^{ij}\nabla_j H_x-2(H_x-\delta h_{nn}^x)\langle\partial^x_n,\nu_y \rangle^2
\end{align}
We get from (\ref{Z_derivative_1}) that
\begin{align}\label{2}
  &2d\langle\eta,\partial^x_i-\langle\partial^x_i,\partial^y_k\rangle g_y^{kl}\partial^y_l\rangle g_x^{ij}\nabla_j H_x\nonumber\\
  =& 2d\langle\eta,\partial^x_n-\langle\partial^x_n,\partial^y_n\rangle\partial^y_n\rangle(\frac{2}{d}\langle\eta,H_x\partial^x_n-\delta h^x_{nn}\partial^x_n\rangle
  +\frac{2}{d^2}\frac{\partial Z}{\partial x^n})\nonumber\\
 = &4(H_x-\delta h^x_{nn})\langle\eta,\nu_y\rangle\langle \partial^x_n,\nu_y\rangle\langle\eta,\partial^x_n\rangle
  +\frac{4}{d}\frac{\partial Z}{\partial x^n}\langle\eta,\nu_y\rangle\langle \partial^x_n,\nu_y\rangle.
\end{align}
Recall the Lemma 4 in \cite{A}
\begin{equation}\label{lem}
  \nu_y\sqrt{1+\frac{2H_x}{\delta^2}Z-\frac{1}{\delta^2}|\nabla_yZ|^2}=\nu_x+\frac{dH_x}{\delta}\eta-\frac{1}{\delta}\frac{\partial Z}{\partial y^q}g_y^{qp}\partial^y_{p},
\end{equation}
by writing $\rho'=\sqrt{1+\frac{2H_x}{\delta^2}Z-\frac{1}{\delta^2}|\nabla_yZ|^2}$, we have
\begin{equation}\label{lem1}
  \rho'\nu_y=\nu_x+\frac{dH_x}{\delta}\eta-\frac{1}{\delta}\frac{\partial Z}{\partial y^q}\partial^y_{q}.
\end{equation}
By (\ref{lem1}), we obtain that
\begin{align}\label{4}
  \langle \eta, \partial^x_n\rangle&=\frac{\delta}{dH_x}\langle \rho'\nu_y -\nu_x+\frac{1}{\delta}\frac{\partial Z}{\partial y^q}\partial^y_{q},\partial^x_n\rangle\nonumber\\
  &=\frac{\delta}{dH_x}\rho'\langle \nu_y,\partial^x_n\rangle+\frac{1}{dH_x}\frac{\partial Z}{\partial y^q}\langle\partial^y_{q},\partial^x_n\rangle.
\end{align}
It follows that
\begin{align}\label{11}
  \langle\eta,\nu_y\rangle\langle \partial^x_n,\nu_y\rangle\langle\eta,\partial^x_n\rangle=&\frac{\delta}{dH_x}\langle\eta,\rho'\nu_y\rangle\langle \partial^x_n,\nu_y\rangle^2+\frac{1}{dH_x}\frac{\partial Z}{\partial y^q}\langle\eta,\nu_y\rangle\langle \partial^x_n,\nu_y\rangle\langle\partial^y_{q},\partial^x_n\rangle\nonumber\\
  =&\frac{\delta}{dH_x}\langle\eta,\nu_x+\frac{dH_x}{\delta}\eta-\frac{1}{\delta}\frac{\partial Z}{\partial y^q}\partial^y_{q}\rangle\langle \partial^x_n,\nu_y\rangle^2\nonumber\\
  &+\frac{1}{dH_x}\frac{\partial Z}{\partial y^q}\langle\eta,\nu_y\rangle\langle \partial^x_n,\nu_y\rangle\langle\partial^y_{q},\partial^x_n\rangle\nonumber\\
   =&(\frac{Z}{d^2H_x}+\frac{1}{2})\langle\partial^x_n,\nu_y\rangle^2-\frac{1}{dH_x}\frac{\partial Z}{\partial y^q}\langle \eta,\partial^y_{q}\rangle\langle \partial^x_n,\nu_y\rangle^2\nonumber\\
  &+\frac{1}{dH_x}\frac{\partial Z}{\partial y^q}\langle\eta,\nu_y\rangle\langle \partial^x_n,\nu_y\rangle\langle\partial^y_{q},\partial^x_n\rangle,
\end{align}
where we use (\ref{lem1}) in the second equality and $Z=\frac{H_x}{2}d^2+\delta d\langle\eta,\nu_x\rangle$ in the last equality.
Combining with (\ref{1}), (\ref{2}), (\ref{4}) and (\ref{11}), we get

\begin{align}
&  \frac{\partial Z}{\partial t}-\sum\limits_{i,j=1}^n(g_x^{ij}\frac{\partial^2 Z}{\partial x^i\partial x^j}+2g_x^{ik}g_y^{jl}\langle\partial^x_k,\partial^y_l\rangle\frac{\partial^2 Z}{\partial x^i\partial y^i}
+g^{ij}_y\frac{\partial^2 Z}{\partial y^i\partial y^i})\nonumber\\
 &=(|A^x|^2+\frac{4(H_x-\delta h^x_{nn})}{d^2H_x}\langle \partial^x_n,\nu_y\rangle^2)Z
  +\frac{4}{d}\frac{\partial Z}{\partial x^i}\langle\eta,\nu_y\rangle\langle \partial^x_n,\nu_y\rangle\nonumber\\
  &\ \ -\frac{4(H_x-\delta h^x_{nn})}{dH_x}\frac{\partial Z}{\partial y^q}\langle \eta,\partial^y_{q}\rangle\langle \partial^x_n,\nu_y\rangle^2\nonumber\\
  &\ \ +\frac{4(H_x-\delta h^x_{nn})}{dH_x}\frac{\partial Z}{\partial y^q}\langle\eta,\nu_y\rangle\langle \partial^x_n,\nu_y\rangle\langle\partial^y_{q},\partial^x_n\rangle.
\end{align}
Assume that second fundamental form for $M_t$ is bounded by $C_0$ on $[0,T]$. Then
$|\nabla^m A|\leq C_m$ on $[0,T]$ by the standard estimates by Ecker and Huisken \cite{EH1}. By Lemma \ref{test_func} and direct
calculations, we get for $h$ which is the function defined in Lemma \ref{test_func}
\begin{equation*}
  D'^{-1}(d_{g(t)}(O,x)+1)\leq h(x)\leq D'(d_{g(t)}(O,x)+1)
\end{equation*}
\begin{equation*}
  |\nabla h|\leq D', \ \ \ \nabla \nabla h\leq D',
\end{equation*}
for $t\in[0,T]$, where $\nabla=\nabla_{g(t)}$ and $D'$ is a constant only depending on $D$, $C_0$ and $C_m$.
Let
$Q(x,y,t)=Z(x,y,t)+\epsilon\zeta(x,y,t)$ where $\zeta(x,y,t)=\xi(x,t)+\xi(y,t)$ with $\xi(x,t)=e^{(B+nD'+D'^2)t+h(x)}$  and
$\xi(y,t)=e^{(B+nD'+D'^2)t+h(y)}$, where  $B$ is positive constant to be determined later.
Then we have  
\begin{equation}\label{c1}
  (\frac{\partial}{\partial t}-\Delta)\xi \geq B \xi,
\end{equation}
\begin{equation}\label{c2}
 |\nabla \xi|\leq D'\xi,
\end{equation}
\begin{equation}
\xi(x,t) \geq e^{D'^{-1}(d_{g(t)}(O,x)+1)},
\end{equation}
on $M^n\times [0,T]$.

We claim that for all $\epsilon>0$ we have $Q(x,y,t)>0$ for all $x\neq y$ and $t\geq 0$. Assuming the claim and taking the
limit as $\epsilon\to 0$, we obtain Theorem \ref{BenAndrew}.
We prove the claim by contradiction. Notice that
\begin{align*}
\frac{Q}{d^2}=&\frac{H_x}{2}+\delta\frac{\langle X(y,t)-X(x,t),\nu_x\rangle}{d^2}+\epsilon\frac{\zeta}{d^2}\\
\geq& \frac{H_x}{2}+\delta\frac{\langle X(y,t)-X(x,t),\nu_x\rangle}{d^2}+\epsilon\frac{e^{D'^{-1}(d_{g(t)}(O,x)+1)}+e^{D'^{-1}(d_{g(t)}(O,y)+1)}}{d^2}.
\end{align*}
Since $\frac{H_x}{2}+\delta\frac{\langle X(y,t)-X(x,t),\nu(x)\rangle}{d^2}$ is uniformly bounded on $[0,T]$,
 for some $K_1$ sufficiently large, we have $Q>0$ when $d_{g(t)}(O,x)\geq K_1$ or $d_{g(t)}(O,y)\geq K_1$, and for some $k_1$ sufficient small, we have $Q>0$ when $d\leq  k_1$, where $K_1$ and $k_1$ is independent of $t$ and $B$.
 
 Now suppose that the claim is false. Then there exists a first time $t_0>0$, the points $x_0\neq y_0$ such that
 $Q(x_0,y_0,t_0)=0$ and $Q(x,y,t)>0$ for all $x, y\in M^n$ and $t<t_0$, moreover,
 \begin{equation}\label{dist}
  k_1\leq  d(x_0,y_0,t_0)\leq d_{g(t_0)}(O,x_0)+d_{g(t_0)}(O,y_0)\leq2K_1,
 \end{equation}
 where $k_1$ and $K_1$ are independent of $B$.
 Then at $(x_0,y_0,t_0)$, we have $\frac{\partial Q}{\partial x_i}=0$, $\frac{\partial Q}{\partial y_i}=0$.
It follows that at $(x_0,y_0,t_0)$
\begin{equation}\label{critical_point1}
-dH_x\langle\eta,\partial^x_i\rangle+\frac{d^2}{2}\nabla_iH_x+\delta d h^x_{iq}g^{qp}_x\langle \eta,\partial^x_p\rangle+\epsilon \frac{\partial \zeta}{\partial {x^i}}=0,
\end{equation}
and
\begin{equation}\label{critical_point2}
dH_x\langle\eta,\partial_{y^i}\rangle+\delta\langle \partial_{y^i}, \nu_x\rangle+\epsilon \frac{\partial \zeta}{\partial {y^i}}=0.
\end{equation}
At $(x_0,y_0,t_0)$ we have
\begin{align}\label{c3}
& 0\geq   \frac{\partial Q}{\partial t}-\sum\limits_{i,j=1}^n(g_x^{ij}\frac{\partial^2 Q}{\partial x^i\partial x^j}+2g_x^{ik}g_y^{jl}\langle\partial^x_k,\partial^y_l\rangle\frac{\partial^2 Q}{\partial x^i\partial y^i}
+g^{ij}_y\frac{\partial^2 Q}{\partial y^i\partial y^i})\nonumber\\
 &=\epsilon(\frac{\partial}{\partial t}-\Delta_x)\xi+\epsilon(\frac{\partial}{\partial t}-\Delta_y)\xi-\epsilon(|A^x|^2+\frac{4(H_x-\delta h^x_{nn})}{d^2H_x}\langle \partial^x_n,\nu_y\rangle^2)\zeta\nonumber\\
 &\ \
  +\epsilon\frac{4(H_x-\delta h^x_{nn})}{dH_x}\frac{\partial \xi}{\partial y^q}\langle \eta,\partial^y_{q}\rangle\langle \partial^x_n,\nu_y\rangle^2\nonumber\\
  &\ \ -\epsilon\frac{4(H_x-\delta h^x_{nn})}{dH_x}\frac{\partial \xi}{\partial y^q}\langle\eta,\nu_y\rangle\langle \partial^x_n,\nu_y\rangle\langle\partial^y_{q},\partial^x_n\rangle.
\end{align}
 Since the mean curvature is strictly positive on $[0,t_0]$ and by (\ref{dist}),
we have at $(x_0,y_0,t_0)$
\begin{equation}\label{c5}
  H_x\geq  k_2>0,
\end{equation}
where $k_2$ is a constant which is independent of $B$.
Notice that the second fundamental form is bounded by $C_0$ on $[0,T]$. Combining with (\ref{c1}), (\ref{c2}), (\ref{dist}),
(\ref{c3}) and (\ref{c5}), we get at $(x_0,y_0,t_0)$
\begin{equation}
  0\geq \epsilon(B-C')\zeta,
\end{equation}
where  $C'$ is a positive constant depending on $C_0$, $C_m$, $k_1$, $k_2$, $K_1$, $D'$ and dependent of $B$. Taking $B=C'+1$, we
obtain a contradiction.
$\Box$

Recall  Haslhofer and Kleiner \cite{Hasl} proved the following local estimate (See Theorem 1.8 in \cite{Hasl}).
\begin{thm}\label{nabla}\cite{Hasl}
 For any $\delta>0$ there exist $\rho(\delta)>0$ and $C_l(\delta)<+\infty$ such that if $M_t$ satisfies $\delta$-Andrews' noncollapsing condition in the parabolic ball $P(x,t,r)=B(x,r)\times (t-r^2,t]$ with $H(p,t)\leq r^{-1}$ then
 \begin{equation}
   \sup\limits_{P(x,t,\rho r)}|\nabla^lA|\leq C_l(\delta) r^{-(l+1)}.
 \end{equation}
\end{thm}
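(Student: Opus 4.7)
By parabolic rescaling in space by $r$ and in time by $r^2$, I may assume $r=1$. The task is then: under $\delta$-Andrews' noncollapsing on $P(x,t,1)$ with $H(x,t)\le 1$, show $|\nabla^l A|\le C_l(\delta)$ on $P(x,t,\rho)$ for some $\rho=\rho(\delta)>0$. The strategy is to convert the non-local noncollapsing information into a uniform graphical representation of the flow on a parabolic neighborhood of $(x,t)$, and then invoke the interior estimates for graphical mean curvature flow of Ecker--Huisken \cite{EH1}.

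The first step is a pointwise comparison: at every point $p\in M_s$, the interior and exterior tangent balls of radius $\ge \delta/H(p)$ force every principal curvature into $[-H(p)/\delta,\,H(p)/\delta]$ by a standard second-order comparison at a point of tangency, so $|A|\le \sqrt{n}\,H/\delta$ pointwise. In particular $|A|(x,t)\le \sqrt{n}/\delta$, and the two tangent balls at $(x,t)$ have radius $\ge \delta$.

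The second, crucial step is to promote this information at the center point to a uniform graphical representation on a whole parabolic neighborhood. Theorem \ref{BenAndrew} shows that $\delta$-Andrews' noncollapsing is preserved under the flow, so the two-sided ball trapping persists backward in time. This forces each slice $M_s$ near $(x,t)$ to be locally the graph of a $C^{1,1}$ function, with norm controlled by $1/\delta$, over its tangent plane at a suitably chosen base point, on a uniform extrinsic scale. On such a graph the equation (\ref{mcf_graph}) is uniformly parabolic with smooth coefficients, and the classical interior Schauder/Krylov--Safonov estimates for quasilinear parabolic equations, followed by a bootstrap, deliver uniform $C^\infty$-bounds on $P(x,t,\rho)$ for some $\rho=\rho(\delta)>0$. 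Translated back into geometric terms via the Gauss--Weingarten relations, these are exactly the desired estimates $|\nabla^l A|\le C_l(\delta)$.

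The main obstacle is this second step. The pointwise estimate $|A|\le \sqrt{n}\,H/\delta$ is elementary, and the interior derivative estimates for graphical mean curvature flow are standard; the non-trivial content is extracting a \emph{uniform} graphical representation over a whole parabolic neighborhood from only the pointwise noncollapsing hypothesis together with the one-point bound $H(x,t)\le 1$. This relies decisively on both the non-local geometric nature of Andrews' ball condition (two-sided trapping yields $C^{1,1}$-graphicality on a uniform scale) and on its preservation under the flow established in Theorem \ref{BenAndrew}. Once the graphical representation is in place, the rest is standard PDE theory.
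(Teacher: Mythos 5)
First, note that the paper does not prove this statement at all: Theorem \ref{nabla} is quoted verbatim as Theorem 1.8 of Haslhofer--Kleiner \cite{Hasl} and used as a black box (it is then combined with Theorem \ref{BenAndrew} to deduce Corollary \ref{nabla1}). So you are attempting to supply a proof where the paper supplies a citation. Your sketch, however, has a genuine gap exactly at the step you yourself flag as ``crucial''. The hypothesis gives a curvature bound $H\le r^{-1}$ at the \emph{single} space-time point $(x,t)$, while the Andrews condition at a nearby point $q$ only provides tangent balls of radius $\delta/H(q)$, with $H(q)$ completely uncontrolled a priori. Hence the claim that the two-sided ball trapping ``forces each slice $M_s$ near $(x,t)$ to be locally the graph of a $C^{1,1}$ function with norm controlled by $1/\delta$ on a uniform extrinsic scale'' does not follow from what you have established: the two tangent balls at $(x,t)$ of radius $\ge \delta r$ only confine $M_t$ to a thin lens-shaped region near the tangent plane, which neither rules out other sheets nor gives a gradient bound, and your pointwise estimate $|A|\le \sqrt{n}\,H/\delta$ is vacuous without first bounding $H$ on a neighborhood. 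Passing from the one-point bound to $\sup_{P(x,t,\rho r)}|A|\le C(\delta)r^{-1}$ is precisely the content of Haslhofer--Kleiner's local curvature estimate, and their proof is a nontrivial contradiction/compactness argument: point selection at almost-maximal curvature, rescaling, a compactness theory for $\delta$-Andrews flows (resting on local one-sided minimization and area bounds), and the observation that the rescaled exterior/interior tangent balls at the original basepoint open up into halfspaces that squeeze the limit to a static plane, contradicting the normalized curvature at the selected points. None of this appears in your sketch, and it is not replaceable by ``standard PDE theory''.

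Two smaller points. Your appeal to Theorem \ref{BenAndrew} is misplaced: the hypothesis of Theorem \ref{nabla} already assumes the $\delta$-Andrews condition throughout the parabolic ball $P(x,t,r)$ (in particular at all earlier times in $(t-r^2,t]$), so no preservation statement is needed, and in the paper's logical architecture Theorem \ref{BenAndrew} is an input used \emph{alongside} Theorem \ref{nabla}, not inside its proof. On the other hand, the final step of your argument is fine: once $|A|\le C(\delta)r^{-1}$ is known on a parabolic neighborhood, the interior derivative estimates of Ecker--Huisken \cite{EH1} (or a Schauder bootstrap in a graphical gauge, which at that stage is legitimately available) do yield $|\nabla^l A|\le C_l(\delta)r^{-(l+1)}$ on a smaller parabolic ball. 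The missing ingredient is the curvature estimate itself, which is the heart of the cited theorem.
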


As the corollary of Theorem \ref{BenAndrew} and Theorem \ref{nabla}, we have the following
\begin{cor}\label{nabla1}
 Let $M_t$ be the mean curvature flow for mean-convex complete noncompact embedded hypersurface in $\mathbb{R}^{n+1}$ with bounded curvature at each time slice.
 If $M_0$ satisfies the $\delta$-Andrews' noncollapsing condition, then $\frac{|\nabla^l A|}{H^{l+1}}\leq C_l(\delta)$ for any $t\in (0,T)$.
\end{cor}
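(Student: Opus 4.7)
The plan is to obtain Corollary \ref{nabla1} as an immediate pointwise application of Theorem \ref{BenAndrew} and the Haslhofer--Kleiner local estimate (Theorem \ref{nabla}). The first step is to invoke Theorem \ref{BenAndrew} so that the $\delta$-Andrews' noncollapsing condition, assumed at $t=0$, propagates to every time slice $M_s$ for $s\in[0,T)$. This is the substantive input, and it is the whole reason for the noncompact extension of Andrews' theorem carried out above.

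The second step is a scale-invariant pointwise estimate. Fix any $(p,t)$ with $t>0$ and choose the natural parabolic scale $r=1/H(p,t)$, saturating the curvature hypothesis $H(p,t)\le r^{-1}$ of Theorem \ref{nabla}. Since $\delta$-noncollapsing holds at every time, it holds throughout the parabolic neighborhood $P(p,t,r)=B(p,r)\times(t-r^2,t]$ whenever this neighborhood lies inside the flow domain. Theorem \ref{nabla} then gives
\begin{equation*}
|\nabla^l A|(p,t)\;\le\;C_l(\delta)\,r^{-(l+1)}\;=\;C_l(\delta)\,H(p,t)^{l+1},
\end{equation*}
which is precisely the claim.

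The main technical point to verify is that $P(p,t,r)$ fits inside the flow domain, i.e.\ that $t\ge r^2=H(p,t)^{-2}$. When this fails, one trims the scale to $r=\min\{H(p,t)^{-1},\sqrt{t}\}$, noting that Theorem \ref{nabla} still applies at this smaller scale, and absorbs the remaining regime (where $H(p,t)$ is very small compared to $t^{-1/2}$) into the standard Ecker--Huisken interior derivative estimates, available because the curvature is bounded at each time slice. This is the only step beyond a direct citation; the bulk of the work is already contained in Theorem \ref{BenAndrew}, and the present corollary reduces to applying Theorem \ref{nabla} at the canonical scale $H^{-1}$.
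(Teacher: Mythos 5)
Your core argument is identical to the paper's proof, which consists precisely of the two citations you give: Theorem \ref{BenAndrew} propagates the $\delta$-Andrews' noncollapsing condition to every time slice, and Theorem \ref{nabla} applied at the scale $r=H(p,t)^{-1}$ yields $|\nabla^l A|(p,t)\le C_l(\delta)H(p,t)^{l+1}$. The paper stops there and does not discuss the domain issue you raise.

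Your attempted patch for the regime $t<H(p,t)^{-2}$, however, does not close. If you trim the scale to $r=\sqrt{t}<H(p,t)^{-1}$, Theorem \ref{nabla} gives $|\nabla^l A|\le C_l(\delta)\,t^{-(l+1)/2}$; but in this regime $H(p,t)<t^{-1/2}$, so $t^{-(l+1)/2}>H(p,t)^{l+1}$ and the bound obtained is strictly weaker than the one claimed. The Ecker--Huisken interior estimates cannot absorb the difference either: they control $|\nabla^l A|$ in terms of $\sup|A|$ over a space-time neighborhood, not in terms of the pointwise quantity $H^{l+1}$, which may be arbitrarily small at the point in question, and the $\delta$-Andrews condition at $t=0$ controls $A$ but not its derivatives. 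So in the regime where $P(p,t,H^{-1})$ does not fit inside the flow's domain, the stated inequality with a constant depending only on $\delta$ and $l$ genuinely requires either the waiting-time restriction $t\ge H(p,t)^{-2}$ or additional hypotheses on the initial data. This caveat is equally present, though unacknowledged, in the paper's own two-line proof; to your credit you noticed it, but the proposed repair does not work as written.
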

\begin{proof}
  By Theorem \ref{BenAndrew}, $M_t$ satisfies the $\delta$-Andrews' noncollapsing condition for all $t\geq 0$. Then Corollary \ref{nabla1} follows from Theorem \ref{nabla} directly.
\end{proof}

Also recall  Haslhofer and Kleiner \cite{Hasl} proved (see Corollary 2.15 in \cite{Hasl})
\begin{thm}\label{nabla22}\cite{Hasl}
  If $M_t$ is an ancient mean-convex smooth  mean curvature flow satisfies the $\delta$-Andrews' noncollapsing condition, then
  $M_t$ is weakly convex.
\end{thm}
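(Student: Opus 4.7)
The plan is to follow the Haslhofer-Kleiner strategy of establishing a scale-invariant convexity estimate $\lambda_1 \geq -\epsilon H$ for every $\epsilon > 0$, and then letting $\epsilon \to 0$. Setting $f = \lambda_1/H$, where $\lambda_1$ is the smallest principal curvature, I would argue by contradiction: suppose $\epsilon^* := \sup(-f) > 0$. Pick a sequence of space-time points $(p_k, t_k)$ with $-f(p_k, t_k) \to \epsilon^*$ and rescale each copy of the flow by $H(p_k, t_k)$ so that the rescaled mean curvature at each marked point equals $1$. By Theorem \ref{BenAndrew} the rescaled flows remain $\delta$-noncollapsed, and by Theorem \ref{nabla} with $r = 1$, on a fixed parabolic neighborhood of the marked point each rescaled flow satisfies $|\nabla^l A| \leq C_l(\delta)$ uniformly in $k$. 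Iterating this estimate along backwards doubling-time parabolic balls, whose existence is guaranteed by the preserved noncollapsing and mean-convexity, propagates the bounds to any prescribed compact parabolic region.

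Standard smooth pointed Cheeger-type compactness then yields a subsequential limit $(M^\infty_t, p_\infty)$ which is ancient, mean-convex, $\delta$-noncollapsed, and attains $-f(p_\infty, 0) = \epsilon^*$ as an interior maximum. I would then apply Hamilton's strong maximum principle for tensor evolution equations to the Weingarten operator $A$: the cone $\{A : \lambda_1 \geq -\epsilon^* H\}$ is invariant under the underlying reaction ODE, and interior equality at $(p_\infty, 0)$ forces the extremal eigendistribution of $A$ to be parallel in space and invariant under the flow. This produces an isometric splitting $M^\infty_t = \mathbb{R}^j \times N^\infty_t$ along the extremal eigendirections, with $N^\infty_t$ an ancient $\delta$-noncollapsed mean-convex flow in dimension $n - j$ on which $\sup(-f) \geq \epsilon^*$ still holds (because the $\mathbb{R}^j$-factor contributes $0$ both to $H$ and to the extremal eigenvalue, so the ratio $f$ of the product agrees with that of the factor $N^\infty_t$).

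Iterating the splitting reduction terminates at a one-dimensional ancient $\delta$-noncollapsed mean-convex planar curve flow, which is automatically strictly convex, forcing $-f \equiv -1 < 0 < \epsilon^*$ and yielding the desired contradiction. Hence $\epsilon^* \leq 0$ and $M_t$ is weakly convex. The main obstacle will be the strong maximum principle / splitting step in the noncompact limit, since Hamilton's tensor maximum principle is most cleanly stated on compact manifolds; this is handled by using the uniform bounded geometry supplied by Theorems \ref{BenAndrew} and \ref{nabla} on arbitrarily large parabolic regions to apply the strong maximum principle locally near $(p_\infty, 0)$, and then propagating the equality globally along geodesics via parallel transport and the analytic-semigroup character of the linearized evolution.
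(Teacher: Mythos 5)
The paper does not actually prove this statement: it is quoted as Corollary 2.15 of Haslhofer--Kleiner \cite{Hasl}, so there is no internal proof to measure your attempt against. What you have written is, in substance, a reconstruction of the Haslhofer--Kleiner proof of their convexity estimate, of which the stated theorem is the ancient-flow consequence: contradiction and point-picking at a sequence almost attaining $\epsilon^*=\sup(-\lambda_1/H)$, rescaling to unit mean curvature, using the preserved noncollapsing (Theorem \ref{BenAndrew}) and the local derivative estimates (Theorem \ref{nabla}) to extract an ancient limit attaining the supremum, then the strong maximum principle and a dimension-reduction induction terminating at convex curves. Two points deserve care. First, the scheme needs $\epsilon^*<\infty$ before any point-picking begins; this is exactly what the exterior ball in the $\delta$-Andrews condition supplies, namely $\lambda_1\geq-H/\delta$, the same inequality the paper extracts by letting $y\to x$ in the proof of Theorem \ref{andrew_application2}. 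Second, your description of the splitting is internally inconsistent: the flat $\mathbb{R}^j$-factor produced by the rigidity case of the maximum principle carries principal curvature $0$, so if the splitting really were ``along the extremal eigendistribution'' you would have $\lambda_1=0$ there, contradicting $\lambda_1=-\epsilon^*H<0$ at once and making the passage to the factor $N^\infty_t$ unnecessary; conversely, if the extremal direction is not the flat one, the ratio does descend to $N^\infty_t$ and the induction runs as you say. Either branch closes the argument, so the sketch is essentially sound, but you should decide which case you are in rather than asserting both. The genuinely hard steps --- the backwards propagation of curvature bounds needed for compactness and the noncompact strong maximum principle/splitting --- are the content of \cite{Hasl}, and your outline correctly identifies them without supplying the details.
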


Finally as the application of Theorem \ref{BenAndrew} and Theorem \ref{nabla22}, we give
a proof of Theorem \ref{andrew_application2}.

\textbf{Proof of Theorem \ref{andrew_application2}.}
  By Theorem \ref{BenAndrew}, $M_t$ satisfies the $\delta$-Andrews' noncollapsing condition for all $t\geq 0$. By taking $y\to x$ in the $\delta$-Andrews' noncollapsing condition,
  we have $-Hg_{ij}\leq \delta h_{ij}\leq Hg_{ij}$ for all $t\geq 0$. Then the second fundamental forms are uniformly bounded for the recaled sequence (\ref{TypeIIb_rescaling}). Hence
  the limit of the recaled sequence (\ref{TypeIIb_rescaling}) is a mean-convex eternal solution $M_{\infty}$ satisfying $\delta$-Andrews' noncollapsing condition. It follows from
  Theorem \ref{nabla22} that the $M_{\infty}$ is weakly convex. Then Corollary \ref{andrew_application2} follows from the strong maximum principle and  Hamilton's Harnack inequality (See Main Theorem B in \cite{Hamilton}).
$\Box$

\section{appendix }
In this section we prove that the limit of rescaled sequence (\ref{scaling}) for convex Type III mean curvature flow is self-expander (Corollary
\ref{self_expander}).
Similar results had been obtained by Hamilton \cite{Ha1} \cite{Hamilton} for Type II Ricci flow and mean curvature flow and Chen and Zhu \cite{CZ} for
Type III Ricci flow. One can use the similar arguments to prove Corollary
\ref{self_expander}.
We give a proof for sake of convenience for the readers.

First we recall Hamilton's Harnack inequality for the mean curvature flow.
\begin{thm}\cite{Hamilton}
For any weak convex solution to mean curvature flow for $t>0$ we have
\begin{equation}\label{Harnack}
 \tilde{ Z}=\frac{\partial H}{\partial t}+\frac{H}{2t}+2V_i\nabla_i H+h_{ij}V_iV_j\geq 0,
\end{equation}
for all tangent vectors $V$.
\end{thm}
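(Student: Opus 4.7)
The plan is to follow Hamilton's matrix-Harnack strategy, adapted from his Ricci flow Harnack estimate. First I would observe that $\tilde{Z}$ is quadratic in $V$ (since $H$, $\nabla H$, $\partial_t H$ and $h_{ij}$ do not depend on $V$), so on a strictly convex solution the assertion $\tilde{Z}\geq 0$ for every $V$ is equivalent to its minimum over $V$ being nonnegative. The minimizer is $V_i=-h^{ij}\nabla_j H$, where $h^{ij}$ is the inverse of the second fundamental form, and the inequality collapses to the scalar form
$$Y:=\partial_t H + \frac{H}{2t} - h^{ij}\nabla_i H\,\nabla_j H \geq 0.$$
I would work with this reduced scalar quantity, returning to the quadratic form only at the end to handle the weakly convex limiting case.

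Next I would derive the evolution equation of $Y$ under mean curvature flow using the standard identities $\partial_t H=\Delta H+|A|^2H$ and $\partial_t h_{ij}=\Delta h_{ij}-2Hh_{ik}h^{k}{}_j+|A|^2 h_{ij}$, together with the induced evolution of $h^{ij}$ and $\nabla H$ and the commutator corrections coming from the time-dependent metric. A lengthy but routine computation gives an equation of the shape
$$(\partial_t-\Delta)Y=L(Y,\nabla Y)+R,$$
where $L$ is linear in $Y$ and $\nabla Y$ (so amenable to a maximum principle) and $R$ is a sum of squares that is manifestly nonnegative on weakly convex flows. The placement of the term $H/(2t)$ is precisely chosen so that the $-H/(2t^2)$ produced under $\partial_t$ cancels the bad zeroth-order terms in the raw evolution.

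With the differential inequality in hand, I would invoke the maximum principle. The critical initial behavior is that $Y\to+\infty$ as $t\downarrow 0$ due to the $H/(2t)$ term (assuming $H>0$ somewhere at $t=0$, which by the strong maximum principle applied to $(\partial_t-\Delta)H=|A|^2H$ spreads instantly to all of $M_t$), so $Y>0$ for small $t$. Propagating forward by the maximum principle preserves $Y\geq 0$ on $(0,+\infty)$. In the noncompact setting, with bounded second fundamental form on each time slice, this requires the cutoff barrier technique already used in the proof of Theorem \ref{BenAndrew}, deploying the function $h$ from Lemma \ref{test_func} to localize the maximum principle before passing to the limit.

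The main obstacle is twofold. First, the evolution computation of $Y$ is algebraically heavy and mirrors Hamilton's Ricci flow Harnack derivation; the subtle point is the bookkeeping of commutators together with the time derivative $\partial_t h^{ij}=-h^{ik}(\partial_t h_{kl})h^{lj}$, and ensuring that every non-square term either enters $L(Y,\nabla Y)$ or cancels against $H/(2t)$. Second, one must deduce the weakly convex case from the strictly convex one: the standard device is to work throughout with the unrestricted quadratic form $\tilde{Z}(V,V)$ rather than inverting $h$, so that the maximum principle argument applies verbatim on the closed cone of weakly convex hypersurfaces and sidesteps the singularity of $h^{-1}$ on its boundary.
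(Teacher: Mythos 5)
First, a point of reference: the paper does not prove this statement at all --- it is Hamilton's Harnack estimate, quoted from \cite{Hamilton} in the appendix --- so your attempt can only be measured against Hamilton's original argument; the one trace of that argument in the paper is the evolution identity for $\tilde Z$ recorded at the start of the proof of Theorem \ref{limit}. At the level of architecture your outline matches Hamilton: the Harnack quadratic, the maximum principle, the $H/(2t)$ term forcing $\tilde Z\to+\infty$ as $t\downarrow 0$, the optimal vector $V_a=-h^{-1}_{ab}\nabla_bH$, and an approximation step for the weakly convex case.

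The gap sits exactly where the theorem lives. The evolution identity is
$(D_t-\Delta)\tilde Z=(|A|^2-\tfrac{2}{t})\tilde Z+2\tilde X_a\tilde U_a-2h_{bc}\tilde Y_{ab}\tilde Y_{ac}-4\tilde Y_{ab}\tilde W_{ab}$,
and the remainder here is \emph{not} ``manifestly a sum of squares'': the terms $2\tilde X_a\tilde U_a$ and $-4\tilde Y_{ab}\tilde W_{ab}$ carry no sign. They acquire one only after the crucial step --- absent from your sketch --- of extending the minimizing vector $V$ in space-time so that $\tilde U_a=\tilde X_a$ and $\tilde Y_{ab}=-\tilde W_{ad}h^{-1}_{db}$, which turns the right-hand side at the minimum into $(|A|^2-\tfrac{2}{t})\tilde Z+2|\tilde X|^2+2\tilde W_{ad}h^{-1}_{db}\tilde W_{ba}\geq 0$; this is precisely the manipulation the paper carries out in proving Theorem \ref{limit}. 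The freedom to prescribe the first-order jet of $V$ off the minimum point is the essential idea of Hamilton's maximum principle for Harnack quadratics, and without it your claim that ``every non-square term either enters $L(Y,\nabla Y)$ or cancels against $H/(2t)$'' is unsupported --- the differential inequality for the scalar $Y$ simply does not close up as written. Relatedly, reducing to $Y=\partial_tH+\tfrac{H}{2t}-h^{ij}\nabla_iH\nabla_jH$ and then ``returning to the quadratic form at the end'' for weak convexity means the computation you actually need is the quadratic one from the outset, so the scalar detour buys nothing. The noncompact localization via Lemma \ref{test_func} is a sensible supplement (Hamilton assumes bounded curvature here), but it does not repair the missing core.
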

\begin{thm}\label{limit}
Any strictly convex  solution to the mean
curvature flow where $\frac{\partial}{\partial t}(\sqrt{t}H)=0$ at some point $(x_0,t_0)$ for $t_0>0$   must be the self-expander.
\end{thm}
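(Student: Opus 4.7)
The plan is to combine Hamilton's Harnack inequality \eqref{Harnack} with a strong-maximum-principle argument to propagate the equality case from the single point $(x_0,t_0)$ to the entire spacetime, and then invoke the classification of the equality case as an expanding self-similar solution.

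First I would unpack the hypothesis. The condition $\frac{\partial}{\partial t}(\sqrt{t}\,H)=0$ at $(x_0,t_0)$ is equivalent to $\partial_tH+\tfrac{H}{2t}=0$ at that point, which is exactly $\tilde Z|_{V=0}=0$. Since $\tilde Z(V)\ge 0$ for every tangent vector $V$, this makes $V=0$ a minimizer of the quadratic $V\mapsto \tilde Z(V)$ at $(x_0,t_0)$; differentiating in $V$ and setting $V=0$ then forces $\nabla H(x_0,t_0)=0$. Strict convexity makes $h_{ij}$ pointwise invertible, so completing the square in $V$ produces the scalar
\[
Q:=\partial_tH+\tfrac{H}{2t}-(h^{-1})^{ij}\nabla_iH\,\nabla_jH=\min_V \tilde Z(V),
\]
which is nonnegative on all of $M\times(0,\infty)$ by \eqref{Harnack} and satisfies $Q(x_0,t_0)=0$ by what we just observed.

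Next I would derive an evolution inequality for $Q$ along the MCF. Following Hamilton's computation in \cite{Hamilton} (the MCF analogue of the matrix Harnack calculation that Chen and Zhu used in \cite{CZ} for Type III Ricci flow), one gets $(\partial_t-\Delta)Q\ge \langle X,\nabla Q\rangle+c\,Q$ with smooth coefficients; combined with $Q\ge 0$ and the interior zero $Q(x_0,t_0)=0$, the strong parabolic maximum principle forces $Q\equiv 0$ on $M\times(0,t_0]$. Uniqueness for MCF starting from the self-similar profile at $t_0$ then propagates the identity to all $t>0$.

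The identity $Q\equiv 0$ is precisely the equality case of Hamilton's Harnack estimate, with optimal vector field $V_*=-(h^{-1})^{ij}\nabla_jH$, and Hamilton's classification of this equality case (see \cite{Hamilton}) says that $M_t$ is then, up to tangential diffeomorphism, an expanding self-similar solution, i.e. a self-expander, which is the desired conclusion. The main obstacle is Step 2: one must work with $Q$ rather than with $\tilde Z$ itself (so that the optimal $V_*$ is allowed to vary in spacetime), compute its evolution inequality cleanly, and verify that the strong maximum principle applies on a possibly noncompact strictly convex hypersurface; the key point allowing the latter is that $Q\ge 0$ globally, so the interior strong parabolic maximum principle at a spacetime minimum yields $Q\equiv 0$ backward from $t_0$ with no growth assumption at infinity.
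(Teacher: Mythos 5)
Your overall strategy coincides with the paper's: both arguments start from $\partial_tH+\frac{H}{2t}=0$ and $\nabla H=0$ at $(x_0,t_0)$, use Hamilton's evolution equation for the Harnack quantity $\tilde Z$ together with the strong maximum principle to propagate the vanishing (with null vector $V_a=-h^{-1}_{ab}\nabla_bH$) to all of spacetime, and then extract the self-similar structure from the equality case. Two of your steps, however, are left as assertions precisely where the paper does the actual work. First, the evolution inequality $(\partial_t-\Delta)Q\ge\langle X,\nabla Q\rangle+cQ$ for $Q=\min_V\tilde Z(V)$ is not derived; the paper sidesteps this by extending the optimal $V$ near the zero point so that $\tilde U_a=\tilde X_a$ and $\tilde Y_{ab}=-\tilde W_{ad}h^{-1}_{db}$, which collapses the reaction terms in Hamilton's identity to $2\tilde W_{ad}h^{-1}_{db}\tilde W_{ba}+2|\tilde X_a|^2\ge 0$ and makes the strong-maximum-principle step and the resulting rigidity ($\tilde W_{ab}=0$, $\tilde X_a=0$) transparent. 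Second, and more importantly, "Hamilton's classification of this equality case" is not available off the shelf: Main Theorem B of \cite{Hamilton} classifies the equality case of the \emph{eternal} Harnack (no $\frac{H}{2t}$ term) as translating solitons, whereas the statement that equality in the full Harnack forces a self-expander is exactly what Theorem \ref{limit} asserts, so citing it is circular. To close the argument you must carry out the derivation the paper gives: from $\tilde W_{ab}=0$ and $\tilde X_a=0$ deduce $\nabla_aV_c=Hh_{ac}+\frac{1}{2t}g_{ac}$, show that the vector $T^{\alpha}$ defined by $\frac{1}{2t}T^{\alpha}=g^{ij}V_i\nabla_jX^{\alpha}+H\nu^{\alpha}+\frac{1}{2t}X^{\alpha}$ is constant, and take its normal component to obtain $H\nu^{\alpha}+\frac{1}{2t}(X^{\alpha}-T^{\alpha})^{\perp}=0$, the self-expander equation.
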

\begin{proof}
Recall Hamilton proved (see Corollary 4.4 in \cite{Hamilton}) the Harnack quantity $\tilde{Z}$ satisfies
\begin{equation}
  (D_t-\Delta)\tilde{Z}=(|A|^2-\frac{2}{t})\tilde{Z}+2\tilde{X}_a\tilde{U}_a
  -2h_{bc}\tilde{Y}_{ab}\tilde{Y}_{ac}-4\tilde{Y}_{ab}\tilde{W}_{ab},
\end{equation}
with
$\tilde{X}_a=\nabla_aH+h_{ab}V_b$, $\tilde{Y}_{ab}=\nabla_aV_b-Hh_{ab}-\frac{1}{2t}g_{ab}$,
$\tilde{W}_{ab}=\frac{\partial}{\partial t}h_{ab}+V_c\nabla_ch_{ab}+\frac{1}{2t}h_{ab}$,
$\tilde{U}_a=(\frac{\partial}{\partial t}-\Delta)V_a+h_{ab}\nabla_b H+\frac{1}{t}V_a$.
Since $\frac{\partial}{\partial t}(\sqrt{t}H)=0$ at some point $(x_0,t_0)$ for $t_0>0$ , we know that
at this point
\begin{equation}\label{yoga1}
\frac{\partial H}{\partial t}+\frac{H}{2t}=0.
\end{equation}
Taking $V_i=-h^{-1}_{ij}\nabla_j H$ in (\ref{Harnack}), we
have at $(x_0,t_0)$
$$ -h^{-1}_{ij}\nabla_i H\nabla_j H\geq 0.$$
It follows that at $(x_0,t_0)$
\begin{equation}\label{yoga2}
 \nabla H=0.
 \end{equation}
Then we obtain that $\tilde{Z}=0$ in the $V=0$ direction. The strong maximum principle implies that
there exists vector $V$ at each point such that $\tilde{Z}=0$. Moreover, the zero factor $V$ is obtained from the first variation of $\tilde{Z}$ by
$V_a=-h_{ab}^{-1}\nabla_bH$.

Now fix $V_a=-h_{ab}^{-1}\nabla_bH$ at $(x_0,t_0)$ and extend $V$ in a neighborhood of $(x_0,t_0)$ in space-time such that
$$
\tilde{U}_a=(\frac{\partial}{\partial t}-\Delta)V_a+h_{ab}\nabla_b H+\frac{1}{t}V_a=\tilde{X}_a,
$$
and
$$
\tilde{Y}_{ab}=\nabla_aV_b-Hh_{ab}-\frac{1}{2t}g_{ab}=-\tilde{W}_{ad}h^{-1}_{db}.
$$
 Then at $(x_0,t_0)$
\begin{align*}
  0&\geq (\frac{\partial}{\partial t}-\Delta)\tilde{Z}\\
  &=(|A|^2-\frac{2}{t})\tilde{Z}-4\tilde{W}_{ab}\tilde{Y}_{ab}+2\tilde{X}_a\tilde{U}_a-2h_{ac}\tilde{Y}_{bc}\tilde{Y}_{ba}\\
  &=4\tilde{W}_{ad}h^{-1}_{db}\tilde{W}_{ba}+2|\tilde{X}_a|^2-2h_{ac}\tilde{W}_{bd}h_{dc}^{-1}\tilde{W}_{be}h^{-1}_{ea}\\
  &=2\tilde{W}_{ad}h^{-1}_{db}\tilde{W}_{ba}+2|\tilde{X}_a|^2,
\end{align*}
which implies that
$$\tilde{W}_{ab}=0,\ \ \tilde{X}_a=0.$$
Thus we obtain
\begin{equation}\label{appendix_1}
\frac{\partial}{\partial t}h_{ab}+V_c\nabla_ch_{ab}+\frac{1}{2t}h_{ab}=0,\ \  \nabla_aH+h_{ab}V_b=0,
\end{equation}
for $V_a=-h_{ab}^{-1}\nabla_b H$ everywhere.
We get from differentiating the second equation in (\ref{appendix_1}) that
\begin{equation}\label{appendix_2}
\nabla_a\nabla_b H+V_c\nabla_a h_{bc}+h_{bc}\nabla_aV_c=0.
\end{equation}
It follows from Theorem 2.3 in \cite{Ha1} that
\begin{align}\label{appendix_3}
 &\nabla_a\nabla_b H
 =\nabla_a\nabla_ch_{bc}\nonumber\\
 =&\nabla_{c}\nabla_ah_{bc}+R_{acbd}h_{dc}+R_{accd}h_{bd}\nonumber\\
 =&\Delta h_{ab}+(h_{ab}h_{cd}-h_{ad}h_{bc})h_{dc}+(h_{ac}h_{cd}-h_{ad}H)h_{bd}\nonumber\\
 =&\frac{\partial }{\partial t}h_{ab}-Hh_{ad}h_{bd}.
\end{align}
By (\ref{appendix_1}), (\ref{appendix_2}) and (\ref{appendix_3}), we get
\begin{equation}\label{appendix_4}
  \nabla_aV_c=Hh_{ac}+\frac{1}{2t}g_{ac}.
\end{equation}
Consider the vector
$$\frac{1}{2t}T^{\alpha}=g^{ij}V_i\nabla_jX^{\alpha}+H\nu^{\alpha}+\frac{1}{2t}X^{\alpha},$$
where $\nu=(\nu^1,\cdots,\nu^{n+1})$ is the unit normal vector of $X$.
By (\ref{appendix_1}) and (\ref{appendix_4}),
we have
$$\nabla_k T^{\alpha}=g^{ij}\nabla_kV_i\nabla_jX^{\alpha}+g^{ij}V_ih_{jk}\nu^a+(\nabla_kH)\nu^{\alpha}-h_{kj}g^{jm}\nabla_mX^{\alpha}+\frac{1}{2t}\nabla_kX^{\alpha}=0.$$
Then
$$g^{ij}V_i\nabla_jX^a+H\nu^{\alpha}+\frac{1}{2t}(X^{\alpha}-T^{\alpha})=0.$$
It follows that $T$ is a constant vector.
Taking the vertical part,
we have
$$H\nu^{\alpha}+\frac{1}{2t}(X^{\alpha}-T^{\alpha})^{\perp}=0.$$
\end{proof}

Finally, we give the proof of Corollary \ref{self_expander}.
\begin{cor}\label{self_expander}
  Let $M_t$ be the Type III convex mean curvature flow for the noncompact hypersurface with bounded second
  fundamental form at each time slice.
  Then the limit obtained as (\ref{scaling}) is a non-flat self-expander splitting as $\mathbb{R}^{n-k}\times \Sigma^k$, where $\Sigma^k$ is strictly convex.
\end{cor}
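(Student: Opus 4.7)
The plan is to derive a self-similar structure on a subsequential limit of the normalized flow (\ref{scaling}) by combining Hamilton's Harnack inequality with his strong maximum principle for weakly convex ancient solutions, and then to invoke Theorem \ref{limit} on the strictly convex factor of an isometric splitting.

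First I would set up uniform control on the rescaled flow $\tilde{M}_s$. The Type III hypothesis $\sup_{M^n\times(0,\infty)} t|A|^2 < \infty$ translates under (\ref{scaling}) into the global bound $|\tilde{A}|\leq C$ for all $s\geq 0$, whereupon Ecker and Huisken's interior estimates furnish $|\tilde{\nabla}^m\tilde{A}|\leq C_m$ on all of $[0,\infty)$ and a positive lower bound on the injectivity radius. This allows intrinsic Cheeger--Gromov extraction of limits at any basepoint $p\in M^n$, exactly as in the proof of Theorem \ref{main1}.

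Next I would establish the pointwise monotonicity of $\sqrt{t}H$. Choosing $V=0$ in Hamilton's Harnack inequality (\ref{Harnack}) yields $\partial_t H + \frac{H}{2t}\geq 0$, equivalently $\partial_t(\sqrt{t}H)\geq 0$ along every Lagrangian trajectory. Convexity forces $H^2\leq n|A|^2$, so Type III keeps $\sqrt{t}H$ uniformly bounded, and the limit $L(p):=\lim_{t\to\infty}\sqrt{t}H(p,t)$ exists for every $p\in M^n$. Transferring to the rescaled flow, $\tilde{H}(p,s)=\sqrt{(2t+1)/t}\cdot\sqrt{t}H(p,t)\to\sqrt{2}\,L(p)$ as $s\to\infty$. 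Since this conclusion is insensitive to any further time translation by a fixed $s_0$, any subsequential Cheeger--Gromov limit $\tilde{M}_\infty$ (taken over $s_j\to\infty$ with fixed basepoint $p$) exists for all $s\in\mathbb{R}$ and satisfies $\partial_s\tilde{H}_\infty\equiv 0$; equivalently, $\partial_t(\sqrt{t}H)\equiv 0$ on the associated ancient, weakly convex mean curvature flow obtained by undoing the rescaling.

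Then I would split off the flat factor and conclude. Hamilton's strong maximum principle for weakly convex ancient mean curvature flows (the MCF analogue of his Ricci flow splitting theorem) forces the nullity distribution of the second fundamental form to be parallel in space and invariant in time, yielding an isometric splitting $\tilde{M}_\infty = \mathbb{R}^{n-k}\times\Sigma^k$ with $\Sigma^k$ strictly convex. Non-flatness $k\geq 1$ is inherited from the original flow: at any point $p\in M^n$ with $H(p,0)>0$ the monotonicity gives $L(p)>0$, so $\tilde{H}_\infty\not\equiv 0$. Since $\partial_t(\sqrt{t}H)\equiv 0$ everywhere on the strictly convex ancient factor $\Sigma^k$, Theorem \ref{limit} applies and makes $\Sigma^k$ a self-expander in $\mathbb{R}^{k+1}$, whence $\mathbb{R}^{n-k}\times\Sigma^k$ is a self-expander in $\mathbb{R}^{n+1}$.

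The main obstacle is the noncompact intrinsic convergence: because the extrinsic positions $\tilde{x}(p,s)$ are not a priori controlled, the limit must be produced in pointed Gromov--Hausdorff sense as in the proof of Theorem \ref{main1}, and one must check that the time-translated rescalings glue into a single eternal limit rather than unrelated slices, which is guaranteed by the pointwise convergence $\tilde{H}(p,s)\to\sqrt{2}L(p)$. Once the ancient limit and its splitting are in hand, the rigidity step from $\partial_s\tilde{H}\equiv 0$ to the self-expander equation is dispatched entirely by Theorem \ref{limit} applied to the strictly convex factor.
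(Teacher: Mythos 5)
Your proposal is correct and follows essentially the same route as the paper: use Hamilton's Harnack with $V=0$ to get pointwise monotonicity and hence convergence of $\sqrt{t}H$, extract an intrinsic pointed limit of the time-translated normalized flows as in Theorem \ref{main1}, observe that $\tilde{H}_\infty$ is constant in $s$ so that $\partial_t(\sqrt{t}H)\equiv 0$ on the limit, split off the flat factor by the strong maximum principle, and conclude via Theorem \ref{limit}. Your added details (the transfer $\tilde{H}(p,s)\to\sqrt{2}L(p)$ and the non-flatness from $L(p)>0$) are exactly the computations the paper records.
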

\begin{rem}
Due to a counter-example in \cite{CN}(see Example 3.4 in \cite{CN}), Corollary \ref{self_expander} is not true if we only assume the Type III mean curvature flow is mean-convex.
\end{rem}
\begin{proof}
  By Hamilton's Harnack (\ref{Harnack}), we have $\sqrt{t}H$ is pointwisely monotone nonincreasing. Taking $\tilde{x}_i(s)=\tilde{x}(s+s_i)$, where $\tilde{x}$ is defined in
  (\ref{scaling}) and $s_i\to+\infty$. We take the limit as the way in the proof of Theorem \ref{main1}. Let $p\in M^n$ be the based point taken in the proof of Theorem \ref{main1}.
  Then  $\tilde{H}_{\infty}(p_{\infty},s)=\lim\limits_{i\to\infty}\tilde{H}(p,s+s_i)=\lim\limits_{i\to\infty}\sqrt{2(t+t_i)+1}H(p,t+t_i)=
  \sqrt{2}\lim\limits_{i\to\infty}\sqrt{t+t_i}H(p,t+t_i)\equiv constant>0$, where  $s_i=\frac{1}{2}\log(2t_i+1)$.
  Then strong maximum principle we know the limit splitting as $\mathbb{R}^{n-k}\times \Sigma^k$, where $\Sigma^k$ is strictly convex.
  Hence
   Corollary \ref{self_expander} holds by Theorem \ref{limit}.
\end{proof}

\end{document}